\numberwithin{equation}{section}
\newtheorem{thm}{Theorem}[section]
    \newtheorem{Proposition}{Proposition}[section]
    \newtheorem{cor}{Corollary}[section]
    \newtheorem{lem}{Lemma}[section]
    \newtheorem{rmk}{Remark}[section]
    \newtheorem{exmp}{Example}[section]
\newcommand{\stirling}{\genfrac{\{}{\}}{0pt}{}}
\begin{document}

\title[Extended COM-Poisson distribution]{On count data models based on Bernstein functions or their inverses}

    \author[G. D'Onofrio]{Giuseppe D'Onofrio$^{\circ}$}
        \address{$^{\circ}$ Dipartimento di Scienze Matematiche, Politecnico di Torino, Corso Duca degli Abruzzi 24, 10129 Torino, Italy}
        \email{giuseppe.donofrio@polito.it}

    \author[F. Polito]{Federico Polito$^{\dagger}$}
        \address{$^{\dagger}$ Dipartimento di Matematica \lq\lq G. Peano\rq\rq, Università degli Studi di Torino, Via Carlo Alberto 10, 10123 Torino, Italy}
        \email{federico.polito@unito.it}

    \author[\v Z. Tomovski]{\v Zivorad Tomovski$^{\ddagger}$$^{\star}$}

        \address{$^{\ddagger}$ Institute for Research and Applications of Fuzzy Modeling, University of Ostrava, 30. dubna 22 702 00 Ostrava, Czech Republic}
        \email{zhivorad.tomovski@osu.cz}
        
        \address{$^{\star}$ Department of Mathematical Analysis and Applications of Mathematics, Faculty of Science Palacký University Olomouc, 17. listopadu 12, 771 46 Olomouc, Czech Republic}
        \email{zhivorad.tomovski@upol.cz}

    \begin{abstract}
        We present a class of positive discrete random variables extending the Conway--Maxwell-Poisson distribution. This class emerges in a natural way from an application in queueing theory and contains distributions exhibiting quite different features. Some of these distributions are characterized by the presence of Bernstein and inverse Bernstein functions. As a byproduct, we give some results on these inverses for which the existing literature is limited. Moreover, we investigate dispersion properties for these count data models, giving necessary and/or sufficient conditions to obtain both over and underdispersion.
        We also provide neat expressions for the factorial moments of any order. This furnishes us with a compact form also in the case of the Conway--Maxwell-Poisson.     
    \end{abstract}

    \medskip
    
    \noindent\keywords{COM-Poisson distribution, Bernstein functions, Inverse Bernstein functions, Underdispersed and overdispersed random variables}
    \subjclass{60E05, 62E10, 33E12, 33B15.}

    \maketitle

    \section{\label{sec:level1}Introduction}

        Starting from the seminal paper by Conway and Maxwell \cite{conway1962queuing}, the interest in the COM-Poisson distribution (also known as CMP distribution) as a model of count data, has  grown continuously in different fields such as statistics \cite{sellers2010flexible, wu}, social and natural sciences \cite{beisemann2022flexible,puig2023some}, economics \cite{sellers2012poisson, shmueli2005useful} (see also the recent monograph by Sellers \cite{sellers2023conway}). 
        By looking at the probability mass function of the COM-Poisson random variable,
        \begin{eqnarray} 
            f_n(\zeta)=\frac{\zeta^n}{n!^\delta}\frac{1}{C_\delta(\zeta)}, \qquad C_\delta(\zeta)=\sum_{n=0}^{\infty}\frac{\zeta^n}{n!^\delta}. \qquad n \in \mathbb{N} = \{ 0,1,\dots\}, \: \delta,\zeta >0,
        \end{eqnarray}
        we note that its structure strongly resembles that of its celebrated special case, the Poisson random variable ($\delta=1$). Despite this similarity, the two distributions differ on a fundamental aspect: the COM-Poisson can exhibit a variance to mean ratio smaller, equal, or larger than unity.
        A long list of variants of the Poisson and COM-Poisson distributions have been proposed during the last decades to address specific modeling needs. The related literature is  extensive and therefore we
        %limit ourselves to 
        %mostly 
        refer to the following recent articles: \cite{MR2535014, cahoy2021flexible, cahoy2013renewal,dicr2015fractional,garra2018note,MR3543682,meerschaert2011fractional,michelitsch2020generalized,pogany2016probability,siri2012parametric}.
        However, by inspecting the very form of the probability mass function of the COM-Poisson random variable a natural extension emerges.
        The power function which distinguishes the COM-Poisson by the Poisson is, in fact, the most prominent example of a Bernstein function or of an inverse Bernstein function, depending on whether the exponent is respectively smaller or larger than unity.
        Led by this intuition, we consider a class of random variables, in which the role of the power function is played by $\phi$, chosen in the space of Bernstein functions or that of their inverses:
        \begin{eqnarray}  \label{pn_COMintro}
            P_0^\phi(\zeta)=\frac{1}{Z(\zeta,\phi)}, \quad P_n^{\phi}(\zeta)=\frac{\zeta^n}{\prod_{k=1}^n \phi(k)}\frac{1}{Z(\zeta,\phi)}, \quad n \in \mathbb{N}^* =\{1,2,\dots\}, \: \zeta>0,
        \end{eqnarray}
        where
        \begin{equation}\label{thermosintro}
            Z(\zeta,\phi)=1+\sum_{n=1}^{\infty}\frac{\zeta^n}{\prod_{k=1}^n \phi(k)}.  
        \end{equation}
        This class of random variables turns out to be very rich and includes the COM-Poisson and the Poisson random variables as well as several other variants of the COM-Poisson that appeared in the literature.
        Our intuition is confirmed by a model of interest in queueing theory with state-dependent service times. We outline it in Section \ref{sec:level2} in the spirit of \cite{conway1962queuing}, in which \eqref{pn_COMintro} arises as a stationary distribution.

        Bernstein functions are characterized by the regular behavior of their derivatives, have a useful integral representation and are often studied thanks to their connection to L\'evy processes (for this reason they also go under the name of Laplace exponents), see \cite{schilling2012bernstein} for a full discussion.
        To investigate the properties of 
        \eqref{pn_COMintro} as a function of $\phi$, we make use of the Bernstein-gamma function \cite{patie2018bernstein}. The latter is the solution $W_\phi$ to the functional equation
        $W_{\phi}(z+1)=\phi(z) W_{\phi}(z)$, $W_\phi(1)=1$, and it generalizes the classical gamma function.
        The Bernstein-gamma function has been used, for example,  to  generalize the Mittag--Leffler function  \cite{patie2021self}
       and  to study the properties of the Wright function      \cite{bartholme2021turan}.
       Here, we define and outline some properties of the compositional inverse of Bernstein  functions. These inverses appear, for instance, in \cite{jedidi}. However, to the best of our knowledge, an extended treatment on inverses is missing in the literature. On this work, we contribute by outlining some of their properties. Moreover, we  obtain an analog of the Bernstein-gamma function  for eventually log-convex functions in the spirit of \cite{webster}. Among the strengths of these functions, in relation to count data models, we mention their convexity that is ultimately related to the underdispersion property of \eqref{pn_COMintro}.
   
    For a random variable, the property of underdispersion refers to the variance to mean ratio being less than unity. If, otherwise, it is greater than unity, we speak of overdispersion. In the literature on count data models these properties are always put in contrast to the undesired mean-equal-variance property of the Poisson model. Real-world count data quite often exhibit overdispersion (e.g.\ heavy-tails), but underdispersion is also possible (e.g.\ zero-inflation). Overdispersed models arises with relatively simple constructions (see \cite{hinde1998overdispersion} for a discussion), while models capable of dealing with underdispersed data are less frequent in the literature (see e.g.\ \cite{ consul1973generalization,MR2200069,huang2023arbitrarily,kemp1968wide,kemp1974family,puig2023some,pujol2014new,sellers2010flexible,zhu2012modeling}).
    We provide sufficient conditions for the underdispersion (overdispersion) according to the properties of $\phi$. 
    In particular, the class of models \eqref{pn_COMintro} can be either overdispersed or underdispersed, depending on whether $\phi$ is chosen in the set of Bernstein functions or of their inverses.

    The paper is organized as follows: Section \ref{sec:level2} describes the queueing model in which \eqref{pn_COMintro} arises as a stationary distribution. In Section \ref{sec:bern}, Bernstein functions are recalled, together with some of their properties. The set of inverse Bernstein functions is presented in the same section, while some of their properties are analyzed in the following Section \ref{fff}.
    Section \ref{sec:level4b} deals with fundamental properties of the introduced model, while Section \ref{disssp} is specifically devoted to the analysis of its dispersion properties. Lastly, in Section \ref{extension}, an extension of the model is explored, obtaining, among other results, a compact expression for the factorial moments.

    \section{\label{sec:level2}The model}

        Following \cite{conway1962queuing} we consider a model of the service in a queueing system whose rate depends on the state of the system.
        In particular, we consider the mean service rate $\mu_n=\phi(n)\mu$, where $n$ is the number of units in the system  and  $\phi:(0,\Lambda) \to (0,\infty)$  
        where $\Lambda$ can be infinity. Further, $1/(\mu\phi(1))$ is the service time if only one unit is in the system.
        Let now define $N(t)$, $t \ge 0$, as the random number of units in the system at time $t$ and $P_n(t) =\mathbb{P}(N (t) = n)$, $n \in \mathbb{N}_\Lambda=\mathbb{N}\cap [0,\Lambda]$.
        We furthermore assume that the inter-arrival times are independent and exponentially distributed with mean $\lambda$. The function $ \phi$ is chosen such that $N$ admits a stationary distribution, that is equation (4) of \cite{abouee2016state} is satisfied:
\begin{equation}\label{cond_conv}
\sum_{i=1}^{\lfloor\Lambda\rfloor}  \frac{\rho^i}{\prod_{j=1}^i \phi(j)} < \infty,
\end{equation}
where $\rho=\lambda/\mu$.

By using the notation of \cite{conway1962queuing} the system of differential difference equations is, for every $n\in\mathbb{N}_\Lambda$:
\begin{eqnarray}
P_0(t+\Delta)&=&(1-\lambda \Delta)P_0(t)+\phi(1)\mu\Delta P_1(t), \nonumber \\
P_n(t+\Delta)&=&(1-\lambda \Delta -\phi(n)\mu \Delta)P_n(t)+\lambda \Delta P_{n-1}(t)+\phi(n+1)\mu \Delta P_{n+1}(t).  \nonumber 
\end{eqnarray}
Letting $\Delta \rightarrow 0$
\begin{eqnarray}
P_0^\prime(t)&=&-\lambda P_0(t)+\phi(1)\mu P_1(t), \nonumber \\
P_n^\prime(t)&=&-(\lambda +\phi(n)\mu) P_n(t)+\lambda P_{n-1}(t)+\phi(n+1)\mu P_{n+1}(t). \nonumber
\end{eqnarray}
Since $N(t)$ admits a stationary distribution,
 we get
\begin{eqnarray}\label{extendedPO}
\frac{\rho}{\phi(1)} P_0&=&P_1, \nonumber \\
(\rho + \phi(n))P_n&=&\rho P_{n-1}+\phi(n+1)P_{n+1}, \qquad n \in \mathbb{N}_\Lambda^* = \mathbb{N}_\Lambda \backslash \{0\},\nonumber 
\end{eqnarray}
and hence
\begin{eqnarray} \label{pn}
P_n=\frac{\rho^n}{\prod_{k=1}^n \phi(k)}P_0. 
\end{eqnarray}

Then, from \eqref{pn}, using the normalization condition, we obtain
\begin{eqnarray} \label{pn_COM}
P_0^\phi(\rho)=\frac{1}{Z(\rho,\phi)}, \qquad P_n^{\phi}(\rho)=\frac{\rho^n}{\prod_{k=1}^n \phi(k)}\frac{1}{Z(\rho,\phi)}, \qquad n \in \mathbb{N}_\Lambda^*,
\end{eqnarray}
where
\begin{equation}\label{thermos}
 Z(\rho,\phi)=1+\sum_{n=1}^{\lfloor\Lambda\rfloor}\frac{\rho^n}{\prod_{k=1}^n \phi(k)}.  
 \end{equation}
Note that \eqref{thermos} converges thanks to \eqref{cond_conv}. 
In the following, we will refer to distribution \eqref{extendedPO} as the extended COM-Poisson distribution (eCOM-Poisson) with characteristic couple $(\rho,\phi)$, due to its analogy to the COM-Poisson that appears as a special case for $\phi(n)=n^\delta$, $\delta>0$. 

In the next sections \ref{sec:bern} and \ref{fff} we will analyze some possible choices for the function $\phi$.

\section{\label{sec:bern}
Bernstein functions and their compositional inverses}
We recall that a function $f:(0,\infty)\rightarrow \mathbb R$ is a Bernstein function if $f\in \mathcal C^\infty$, $f(\lambda)\geq 0$, $\forall \: \lambda>0$, and 
\begin{equation}
\label{deriv_bern}
 (-1)^{n-1}f^{(n)}(\lambda)\geq 0, \quad \forall \: n\in \mathbb N^* = \{1,2,\dots\},
 \quad  \forall \lambda>0,
\end{equation} 
where $f^{(n)}$ denotes the $n$-th derivative of $f$. The space of Bernstein functions is usually denoted by $\mathcal{BF}$. Correspondingly, $\mathcal{BF}_\mathrm{b}$ denotes the space of bounded Bernstein functions. Furthermore we denote by $\mathcal{BF}^0$ the space of Bernstein functions $f$ such that $\lim_{\lambda \to 0+} f(\lambda) = 0$. 
We will only recall in this section the results that will be directly used in the following.
For an extensive discussion on the properties of this class of functions
see \cite{schilling2012bernstein}.

A further characterization of a Bernstein  function $f:(0,\infty)\rightarrow \mathbb R$ is given by the following representation:
\begin{equation}\label{integral}
    f(\lambda)=a+b\lambda+\int_{(0,\infty)}(1-e^{-\lambda t})\,\mu(\mathrm dt)
\end{equation}
where $a,b\geq 0$ and $\mu$ is a measure on $(0,\infty)$ satisfying $\int_{(0,\infty)}\min(1,t)\mu(\mathrm dt)<\infty$.
In particular, $f$ is said to be a complete Bernstein function $(f\in \mathcal{CBF})$ if $\mu$ has a completely monotone density with respect to the
Lebesgue measure.

Let us now recall some known properties of the functions mentioned above.
\begin{Proposition}
i) A function $f \in \mathcal{BF}_\mathrm{b}$ if, and only if, in \eqref{integral} $b=0$ and $\mu(0,\infty)<\infty$;
    ii) If $f \in \mathcal{CBF}$, $f\ne 0$ then the functions $\frac{\lambda}{f(\lambda)}$,
    $1/f\left(\frac{1}{\lambda}\right)$ and $\lambda f\left(\frac{1}{\lambda}\right)$ belong to $\mathcal{CBF}$.
\end{Proposition}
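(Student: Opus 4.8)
\emph{Proof proposal.} For part i) I would argue directly from the integral representation \eqref{integral}. For the ``if'' implication, assume $b=0$ and $\mu(0,\infty)<\infty$; since $0\le 1-e^{-\lambda t}\le 1$ for all $\lambda,t>0$, the representation gives $0\le f(\lambda)\le a+\mu(0,\infty)$ uniformly in $\lambda$, so $f\in\mathcal{BF}_b$. For the converse, suppose $f$ is bounded. From \eqref{integral} one has $f(\lambda)\ge b\lambda$, so boundedness forces $b=0$; and since $1-e^{-\lambda t}\uparrow 1$ as $\lambda\uparrow\infty$ for each fixed $t>0$, monotone convergence gives $f(\lambda)\uparrow a+\mu(0,\infty)$, whence $\mu(0,\infty)<\infty$ because the left-hand side stays bounded.

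For part ii) the plan is to reduce every claim to the duality between complete Bernstein functions and Stieltjes functions (henceforth $\mathcal{S}$), for which I would invoke \cite{schilling2012bernstein}: (A) $f\in\mathcal{CBF}$ if and only if $\lambda\mapsto f(\lambda)/\lambda\in\mathcal{S}$ (equivalently, $\lambda$ times a Stieltjes function lies in $\mathcal{CBF}$); (B) if $h\in\mathcal{S}$ and $h\not\equiv 0$ then $1/h\in\mathcal{CBF}$; and the auxiliary fact (C) that $f\in\mathcal{CBF}$ implies $\lambda\mapsto f(1/\lambda)\in\mathcal{S}$. I would obtain (C) by the substitution $t\mapsto 1/t$ in the Stieltjes representation $f(\lambda)/\lambda=\tfrac{a}{\lambda}+b+\int_{(0,\infty)}(\lambda+t)^{-1}\sigma(\mathrm dt)$ provided by (A): this rewrites $f(1/\lambda)$ as $a+\tfrac{b}{\lambda}+\int_{(0,\infty)}\tfrac{s}{\lambda+s}\,\tilde\sigma(\mathrm ds)$ with $\tilde\sigma$ the image of $\sigma$, and since $\tfrac{s}{\lambda+s}=\int_{(0,\infty)}(\lambda+r)^{-1}\,s\,\delta_s(\mathrm dr)$, the right-hand side is of Stieltjes form, the integrability condition being inherited from that of $\sigma$. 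A preliminary remark is that a non-zero $f\in\mathcal{CBF}$ is strictly positive on $(0,\infty)$ (a zero there would propagate by monotonicity of $f$ and then by real-analyticity), so all quotients below make sense.

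The three claims then follow in a line each. Because $f\not\equiv 0$, fact (A) gives $f(\lambda)/\lambda\in\mathcal{S}\setminus\{0\}$, so (B) yields $\lambda/f(\lambda)=(f(\lambda)/\lambda)^{-1}\in\mathcal{CBF}$. Next, (C) gives $f(1/\lambda)\in\mathcal{S}\setminus\{0\}$; applying (B) to it yields $1/f(1/\lambda)\in\mathcal{CBF}$, while applying the second form of (A) yields $\lambda\,f(1/\lambda)\in\mathcal{CBF}$.

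The main obstacle is not the stringing together of these implications, which is routine, but the underlying facts (A)--(C): once they are granted, the three statements are immediate. For a self-contained treatment the genuinely substantive point is (B) -- that the reciprocal of a nonzero Stieltjes function is a complete Bernstein function -- which is most efficiently obtained from the Pick/Nevanlinna characterisation of $\mathcal{CBF}$ (holomorphic continuation to $\mathbb{C}\setminus(-\infty,0]$ mapping the open upper half-plane into its closure). That same characterisation offers a direct alternative route to part ii): the maps $\lambda\mapsto 1/\lambda$ and $w\mapsto 1/w$ swap the two half-planes and swap them back, which settles $1/f(1/\lambda)$ and $\lambda f(1/\lambda)$ geometrically, whereas $\lambda/f(\lambda)$ additionally needs the sharp bound $|\arg f(z)|\le|\arg z|$ valid for every $f\in\mathcal{CBF}$.
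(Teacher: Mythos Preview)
The paper does not prove this proposition at all: it is stated as a recall of standard facts, with the reader referred to \cite{schilling2012bernstein} for details. Your argument is correct and is essentially the textbook route one finds there --- part i) by monotone convergence in the L\'evy--Khintchine representation, and part ii) via the Stieltjes/$\mathcal{CBF}$ duality (your facts (A)--(C)) or, equivalently, the Nevanlinna--Pick characterisation you mention at the end. One minor point: in your derivation of (C), the rewriting $\tfrac{s}{\lambda+s}=\int_{(0,\infty)}(\lambda+r)^{-1}\,s\,\delta_s(\mathrm dr)$ is correct but the ensuing claim that the aggregate measure satisfies the Stieltjes integrability condition $\int (1+t)^{-1}\,\cdot\,<\infty$ deserves one more line; alternatively, (C) drops out immediately from the Pick characterisation you already invoke, which avoids the bookkeeping.
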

A function $f\in \mathcal{BF}$ is said to be a special Bernstein function, and we write $f\in \mathcal{SBF}$, if the
function $\lambda/f(\lambda) \in \mathcal{BF}$.
It follows that if $f \in \mathcal{SBF}$, then $\lambda/f(\lambda) \in \mathcal{SBF}$.

Let $f$ be a positive function in $\mathcal{ BF}^0$.
Then we define its (compositional) inverse $h$ as
\begin{eqnarray}
\label{def_inv}
h(s) :=\inf \{ x>0 \colon f(x)> s \}, \qquad 0<s<\lim_{\lambda \to \infty}f(\lambda) = \Lambda. 
\end{eqnarray}
Plainly, $h\colon (0,\Lambda) \rightarrow \mathbb{R}$ are positive, convex, $\mathcal{C}^\infty$ functions. 
We will write $\mathcal{IBF}$ for the space of such inverse functions.
Correspondingly, we have,
\begin{align}
    f(\lambda) := \inf\{x\in(0,\Lambda)\colon h(x)> \lambda \}.
\end{align}
Notice that $\Lambda = \lim_{\lambda \to \infty}f(\lambda)$ is also the point at which $\lim_{s \to \Lambda -} h(s) = \infty$.

The $n$-th derivative of $h$ can be derived by recurring at the formula for higher order derivatives of inverse functions (see e.g~\cite{kaneiwa}):
\begin{align*}
h^{(n)}(s)&=\frac{(-1)^{n-1}}{[f^{(1)}(h(s))]^{2n-1}} \!\!\!\!
\sum_{\substack{s_1+s_2+\ldots s_n=n-1 \\ s_1+2\cdot s_2+\ldots+n\cdot s_n=2n-2}} \!\!\!\!
\frac{(-1)^{s_1}(2n-s_1-2)![f^{(1)}(h(s))]^{s_1}\dots[f^{(n)}(h(s))]^{s_n}}{(2!)^{s_2}s_2!\dots(n!)^{s_n}s_n!} \\
& = \frac{1}{f^{(1)}(h(s))} \mathcal{C}_{2n-2,n-1}\left( \frac{1}{f^{(1)}(h(s))}, - \frac{f^{(2)}(h(s))}{f^{(1)}(h(s))}, \dots, - \frac{f^{(n)}(h(s))}{f^{(1)}(h(s))}  \right),
\end{align*}
where
\begin{align*}
    \mathcal{C}_{h,k} & (x_1, \dots, x_{h-k+1}) \\
    & = \hspace{-1cm} \sum_{\substack{j_1+j_2+\ldots+j_{h-k+1}=k \\ j_1+2\cdot j_2+\dots+(h-k+1)j_{h-k+1}=h}}
    \hspace{-0.5cm} \frac{1}{\binom{h}{j_1}}
    \frac{h!}{j_1!j_2!\dots j_{h-k+1}!}\left[\frac{x_1}{1!}\right]^{j_1}\left[\frac{x_2}{2!}\right]^{j_2}\hspace{-0.3cm}\dots\left[\frac{x_{h-k+1}}{(h-k+1)!}\right]^{j_{h-k+1}}
\end{align*}
are weighted exponential Bell polynomials with weights $\binom{h}{j_1}^{-1}$.
\begin{rmk}
    Note that a characterization of $h$ through the signs of its derivatives cannot be given as it is done in the case of Bernstein functions. As an example take the function $h(s)=s^{\beta}$, $\beta>1$.
\end{rmk}

\begin{exmp}[Pairs of Bernstein and corresponding inverse Bernstein functions]
\begin{equation}\label{h_1}
    f(\lambda)=\lambda^\alpha; \qquad h(s)=s^{1/\alpha}, \quad \alpha \in (0,1);
\end{equation}

\begin{equation}\label{h_2}
    f(\lambda)=\frac{a\lambda}{\lambda+1}, \quad a \in (0,\infty);
    \qquad h(s)=\frac{s}{a-s}, \quad s \in (0,a);
\end{equation}

\begin{equation}\label{h_3}
    f(\lambda)= (\lambda+1)^\alpha -1; \qquad
    h(s)=(s+1)^{1/\alpha}-1,  \quad \alpha \in (0,1);
\end{equation}

\begin{equation}\label{h_4}
    f(\lambda)=\sqrt{\frac{a\lambda}{\lambda+1}},
    \quad a \in(0,\infty); \qquad h(s)=\frac{s^2}{a-s^2},
    \quad s \in (0,\sqrt{a});
\end{equation}

\begin{equation}\label{h_5}
    f(\lambda)= \mathcal{W}(\lambda); \qquad
    h(s)=se^s, 
\end{equation}
where $\mathcal{W}$ is the Lambert function on the positive real line;

\begin{equation}\label{h_6}
    f(\lambda)= \log(1+\lambda^\alpha); 
    \qquad h(s)=(e^s-1)^{1/\alpha}, \quad \alpha\in (0,1);
\end{equation}

\begin{equation}\label{h_7}
    f(\lambda)= \log(\cosh(\sqrt{2 \lambda})); 
    \qquad h(s)=\frac{1}{2}\log^2(e^s+\sqrt{e^{2s}-1});
\end{equation}

\begin{equation}\label{h_8}
   f(\lambda)=\log\left(1+\frac{\lambda}{a}\right); 
   \qquad h(s)=a(e^s-1), \quad a>0.
\end{equation}
\end{exmp}

\begin{exmp}[Construction based on L\'evy--Laplace exponents]
A large set of inverses of Bernstein functions emerges as follows. The class of L\'evy--Laplace exponents has been considered in \cite{jedidi,Bertoin2004SomeCB} and it is defined as the set
\begin{equation}
    \mathcal{LE}=\left\{ \Psi: \Psi(s)=a+bs+cs^2+\int_{(0,\infty)}(e^{-sx}-1+sx)\nu(\mathrm{d}x), \quad a,b,c \geq 0\right\},
\end{equation}
where $\nu$ is a positive measure on $(0,\infty)$ such that $\int_{(0,\infty)}(x \wedge x^2)\nu(\mathrm{d}x)<\infty$ and $(a,b,c,\nu)$ is the quadruple characteristics of $\Psi$.

If $\Psi \in \mathcal{LE}$ and $a = \Psi(0) = 0$, then $\Psi$ is the Laplace exponent of a spectrally negative L\'evy
process $Z = (Z_t)_{t\geq 0}$,
that is a L\'evy process with no positive jumps that do not drift to $-\infty$ (see \cite{kyprianou2014fluctuations}). Thus, it holds
\begin{equation}
    e^{t\Psi(\lambda)} = \mathbb E[e^{\lambda Z_t}], \qquad t, \lambda \geq 0.   
\end{equation}
Moreover, $\Psi(\lambda)=\lambda f(\lambda)$ where $f$ is a Bernstein function \cite{patie_jacobi}.

The class of  functions in $\mathcal{LE}$ with $a=0$ is denoted by $B_3$ in \cite{Bertoin2004SomeCB} and is also known as the class of branching mechanisms for (sub)critical continuous state branching processes \cite{gall1999spatial}.

From Proposition 9 in \cite{Bertoin2004SomeCB},
every function in $B_3$ is such that its inverse is a Bernstein function, 
implying that every function $\Psi$ belonging to $\mathcal{LE}$ with $a=0$ can be considered as the inverse function of a Bernstein function $\widetilde f$.
We stress that $\widetilde f$ is not usually the Bernstein function associated to the process $Z$.

One can give the following interpretation:
if $\widetilde f$
is the the inverse of $\Psi \in B_3$, the Bernstein
function $[\frac{\mathrm d}{\mathrm d \lambda}\widetilde f(\lambda)]^{-1}$
is the exponent of the subordinator defined as the inverse of the local time
at zero of $Z$.
\end{exmp}

\begin{rmk}
\label{rmk:conv}
  Let us discuss condition \eqref{cond_conv} for Bernstein and Inverse Bernstein functions.
  If $\phi \in \mathcal{IBF}$ and $\Lambda=\infty$, then $\phi$ grows at least linearly at infinity. Hence,  \eqref{cond_conv} holds for all $\rho \in \mathbb R^+$.
  If $\phi \in \mathcal{BF}$ such that $\phi(\infty) =\infty$, then \eqref{cond_conv} holds true  for all $\rho \in \mathbb R^+$ due to the following asymptotic expansion of the Bernstein-Gamma function  (see \cite{MR4320772}, Theorem 5.1), 
  $$W_{\phi}(n)=\prod_{j=1}^{n-1}\phi(j) \asymp C_{\phi}\frac{1}{\phi(n)\sqrt{\phi(n+1)}}e^{\int_1^{n+1}\log \phi(u)du}.$$  
  If $\phi \in \mathcal{BF}$ is bounded by $\Lambda$, then by examining the limit of the ratio of consecutive terms,
   \eqref{cond_conv} holds  true for all $\rho \in (0,\Lambda)$.
   \end{rmk}

\section{Some properties of inverse Bernstein functions}\label{fff}

\subsection{Log-concave and exp-convex functions}

We noted in the previous section that
the inverse functions defined as in  \eqref{def_inv} are convex. We discuss here further properties.

We recall that a  positive function  $g\colon I\subset \mathbb R\rightarrow \mathbb R^+$,  is log-convex (-concave) if $\log g$ is a convex (concave) function. Similarly, $g$ is  exponentially-convex (-concave) 
if $\exp g$ is a convex (concave) function.
A characterizing property of twice-differentiable log-convex (-concave) functions is that $g''(x)g(x)-(g'(x))^2$ is non-negative (non-positive) \cite{webster}.
Finally, we recall that a function $g$ is eventually log-convex if there exists $m \in \mathbb R$ such that $\log g$ is convex in $(m,\infty)$.\\
The following characterizing inequalities for log-convex functions $g: I\rightarrow \mathbb R^+$ hold:
\begin{equation}
\label{logconv_1}
    g(\lambda x+\mu y)\leq g^\lambda(x)g^\mu(y)
    \end{equation}
     \begin{equation}
     \label{logconv_2}
     g^\lambda(x) g^\mu(y) \leq  \lambda g(x)+\mu g(y)
     \end{equation}
whenever, $x,y\in I$, $\lambda, \mu>0$ and $\lambda+\mu=1$
and
      \begin{equation}
      \label{logconv_3}
     g^{z-x}(y)\leq g^{z-y}(x)g^{y-x}(z)
     \end{equation}
for $x,y,z \in I$ and $x<y<z$.

We have the following results.
\begin{lem}\label{lemma_exp}
  Let $g(x)$, $x\in \mathbb R^+$, be a twice-differentiable exponentially-concave (-convex) function. Then, 
  \begin{equation}
      g''(x)+\left(g'(x) \right)^2\leq (\geq) \ 0, \qquad \forall \, x\in \mathbb R^+.
  \end{equation}
\end{lem}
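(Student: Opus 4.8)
The plan is to reduce the assertion to the elementary second-derivative test for convexity and concavity, applied to the auxiliary function $G(x):=e^{g(x)}$.

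First I would unwind the definition: saying that $g$ is exponentially-concave (-convex) on $\mathbb R^+$ means precisely that $G=e^{g}$ is a concave (convex) function on $\mathbb R^+$. Since $g$ is twice differentiable (the same standing regularity assumed for the log-convexity characterization recalled just above), so is $G$, and concavity (convexity) of $G$ on the interval $\mathbb R^+$ is equivalent to $G''(x)\le 0$ ($G''(x)\ge 0$) for every $x\in\mathbb R^+$.

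Next I would simply differentiate twice. By the chain rule,
\begin{equation*}
G'(x)=g'(x)\,e^{g(x)},\qquad G''(x)=\bigl(g''(x)+(g'(x))^{2}\bigr)e^{g(x)}.
\end{equation*}
Because $e^{g(x)}>0$ for every $x\in\mathbb R^+$, the sign of $G''(x)$ coincides with the sign of $g''(x)+(g'(x))^{2}$. Hence $G$ concave forces $g''+(g')^{2}\le 0$ everywhere, and $G$ convex forces $g''+(g')^{2}\ge 0$ everywhere, which is exactly the claim.

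There is no genuine obstacle here: the statement is a one-differentiation translation of the defining property, entirely parallel to the identity $(\log g)''=\bigl(g''g-(g')^{2}\bigr)/g^{2}$ that underlies the log-convex case. The only point worth keeping in mind is the regularity, namely that one uses $g\in\mathcal C^{2}$ so that $G\in\mathcal C^{2}$ and the second-derivative characterization of concavity/convexity on an interval applies; the positivity of $e^{g}$ then does the rest.
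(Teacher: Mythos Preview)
Your proof is correct and follows exactly the same approach as the paper: the paper simply notes that $\frac{d^2}{dx^2}e^{g(x)}\le 0$ by definition and that the statement follows from straightforward calculations, which is precisely the computation $G''=(g''+(g')^{2})e^{g}$ together with $e^{g}>0$ that you carried out.
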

\begin{proof} We prove the exponentially-concave case. Clearly, $\frac{d^2}{dx^2}e^{g(x)}\leq 0$
    and the statement follows after straightforward calculations.
    The exponentially-convex case is proved analogously. 
\end{proof}

\begin{lem}\label{lemma_log}
  Let $f$ be an exponentially-concave  function in $\mathcal{BF}^0$. Then, its compositional inverse $h$ is eventually log-convex.
\end{lem}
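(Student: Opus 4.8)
The plan is to reduce the claim to the sign of the derivative $\tfrac{d^2}{ds^2}\log h(s) = \big(h''(s)h(s) - (h'(s))^2\big)/h(s)^2$, and since $h>0$, to show that $h''(s)h(s) - (h'(s))^2 \ge 0$ for all $s$ large enough. First I would use the chain rule on $h = f^{-1}$: writing $\lambda = h(s)$, we have $h'(s) = 1/f'(\lambda)$ and $h''(s) = -f''(\lambda)/\big(f'(\lambda)\big)^3$ (this is the $n=2$ case of the higher-derivative formula already recorded in Section~\ref{sec:bern}). Substituting these expressions, the inequality $h''(s)h(s) - (h'(s))^2 \ge 0$ becomes, after clearing the positive factor $f'(\lambda)^3$,
\begin{equation*}
-\,\lambda\, f''(\lambda) \;-\; f'(\lambda) \;\ge\; 0,
\end{equation*}
that is, $\lambda f''(\lambda) + f'(\lambda) \le 0$, equivalently $\big(\lambda f'(\lambda)\big)' \le 0$ — but this reads the wrong way since $f\in\mathcal{BF}$ forces $f'\ge 0$. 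Re-deriving carefully: with $h''(s) = -f''(\lambda)/f'(\lambda)^3$ and $f''\le 0$, we get $h''\ge 0$ (convexity, as already noted), and the condition $h''h - (h')^2\ge 0$ is $-\lambda f''(\lambda)/f'(\lambda)^3 \cdot \lambda \ge 1/f'(\lambda)^2$, i.e. $-\lambda^2 f''(\lambda) \ge f'(\lambda)$. Wait — $h(s)=\lambda$, not $\lambda^2$; the correct inequality is
\begin{equation*}
-\,\lambda\, f''(\lambda) \;\ge\; f'(\lambda), \qquad \lambda = h(s)\ \text{large}.
\end{equation*}

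The next step is to bring in the hypothesis that $f$ is exponentially-concave. By Lemma~\ref{lemma_exp} applied to $g=f$, exponential concavity gives $f''(\lambda) + \big(f'(\lambda)\big)^2 \le 0$, hence $-f''(\lambda) \ge \big(f'(\lambda)\big)^2$. Multiplying by $\lambda>0$,
\begin{equation*}
-\,\lambda\, f''(\lambda) \;\ge\; \lambda\,\big(f'(\lambda)\big)^2.
\end{equation*}
So it suffices that $\lambda\big(f'(\lambda)\big)^2 \ge f'(\lambda)$, i.e. $\lambda f'(\lambda) \ge 1$, for all sufficiently large $\lambda$. This is the crux: I must show $\liminf_{\lambda\to\infty}\lambda f'(\lambda) \ge 1$ (in fact $\lambda f'(\lambda)\to\infty$ would be cleanest). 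Here I would use that $\Psi(\lambda):=\lambda f(\lambda)$ is a Laplace exponent and that $f$ exponentially concave with $f(0+)=0$ forces $f$ unbounded — indeed if $f$ were bounded, $e^{f}$ bounded and concave on $(0,\infty)$ would be eventually decreasing, contradicting $f$ increasing; so $f(\lambda)\to\Lambda=\infty$. Then integrating the concavity of $e^{f}$, or more directly: exponential concavity means $f' e^{f}$ is non-increasing, so $f'(\lambda)e^{f(\lambda)} \le f'(1)e^{f(1)}=:c$ for $\lambda\ge 1$, giving $f'(\lambda)\le c\,e^{-f(\lambda)}$; then $f(\lambda)=f(1)+\int_1^\lambda f' \le f(1) + c\int_1^\lambda e^{-f(t)}\,dt$. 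Combined with $f(\lambda)\to\infty$ and $f$ increasing, an elementary ODE-comparison argument ($u' \le c e^{-u}$ forces $e^{u}\le e^{u(1)}+c(\lambda-1)$, hence $f(\lambda)\le \log(\mathrm{const}+c\lambda)$, and then $f'(\lambda)\le c/(\mathrm{const}+c\lambda)$) — hmm, that bounds $f'$ from above, not below. I need a lower bound on $\lambda f'(\lambda)$, so I would instead argue that $\lambda f'(\lambda)\ge f(\lambda) - f(0+) = f(\lambda)$ fails in general (concave $f$ gives the reverse); rather, for $f$ concave and increasing, $f'(\lambda)\ge \big(f(2\lambda)-f(\lambda)\big)/\lambda$, so $\lambda f'(\lambda)\ge f(2\lambda)-f(\lambda)$, and I would need $\limsup\big(f(2\lambda)-f(\lambda)\big)\ge 1$ — which can FAIL (e.g. $f=\log\log$). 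So the genuine content must come from $\Psi=\lambda f(\lambda)$ being a Laplace exponent, which constrains $f$ near $0$ more than near $\infty$; I would instead write $\lambda f'(\lambda) = \Psi'(\lambda) - f(\lambda)$ and...

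The main obstacle, then, is precisely establishing the growth lower bound $\lambda f'(\lambda)\ge 1$ for large $\lambda$ from exponential concavity alone. My expectation is that the cleanest route is: exponential concavity of $f$ $\Rightarrow$ $\log f$ need not be concave, but $(\log e^{f})' = f'$ is non-increasing, so $f' \downarrow L \ge 0$; if $L>0$ then $\lambda f'(\lambda)\to\infty$ and we are done immediately; if $L=0$, then since $f'\to 0$ but $f\to\infty$ (established above), the integral $\int^\infty f'(\lambda)\,d\lambda$ diverges while $f'\downarrow 0$, and one shows via the integral representation $f(\lambda)=b\lambda+\int(1-e^{-\lambda t})\mu(dt)$ that $f'(\lambda)=b+\int t e^{-\lambda t}\mu(dt)$; with $b=0$ forced (else $L=b>0$), the quantity $\lambda f'(\lambda)=\int \lambda t e^{-\lambda t}\mu(dt)$, and I would compare it to $f(\lambda)=\int(1-e^{-\lambda t})\mu(dt)$. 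Since $\Psi=\lambda f\in\mathcal{LE}$ with $a=0$ is a branching mechanism, Proposition~9 of \cite{Bertoin2004SomeCB} applies, and I would lean on that structural fact — this is where I expect to invoke the earlier-stated results most heavily and where the real work lies. Once $\lambda f'(h(s)) = h(s)\,f'(h(s)) \ge 1$ is in hand for $s$ beyond some threshold $m$, chaining the three displayed inequalities yields $h''(s)h(s)-(h'(s))^2 \ge 0$ on $(m,\infty)$, hence $(\log h)''\ge 0$ there, i.e. $h$ is eventually log-convex, completing the proof.
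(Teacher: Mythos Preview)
Your reduction is correct: writing $\lambda=h(s)$, log-convexity of $h$ at $s$ is equivalent to $-\lambda f''(\lambda)\ge f'(\lambda)$, i.e.\ to $(\lambda f'(\lambda))'\le 0$, and via the exponential-concavity bound $-f''\ge (f')^2$ it would indeed suffice to have $\lambda f'(\lambda)\ge 1$ for large $\lambda$. The obstacle you isolate is real and cannot be overcome under the stated hypotheses. Take $f(\lambda)=\log(1+\lambda)$ (this is \eqref{h_8} with $a=1$): it lies in $\mathcal{BF}^0$ and is exponentially concave since $e^{f(\lambda)}=1+\lambda$ is affine, hence concave; but $\lambda f'(\lambda)=\lambda/(1+\lambda)<1$ for every $\lambda$, and in fact $(\lambda f')'=1/(1+\lambda)^2>0$, so already the \emph{necessary} condition $(\lambda f')'\le 0$ fails. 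Correspondingly $h(s)=e^s-1$ satisfies $h''(s)h(s)-(h'(s))^2=-e^s<0$ for all $s>0$, so $h$ is strictly log-\emph{concave} on all of $(0,\infty)$ and is not eventually log-convex. (If one prefers strict exponential concavity, $f(\lambda)=\tfrac12\log(1+2\lambda)$ gives the same conclusion.) The lemma, as stated, is therefore false, and no argument can close your gap.

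The paper's own proof is shorter only because of an algebraic slip. Substituting $f'=1/h'(f)$ and $f''=-h''(f)/[h'(f)]^3$ into $f''+(f')^2\le 0$ actually yields
\[
-\frac{h''(f(\lambda))-h'(f(\lambda))}{[h'(f(\lambda))]^3}\le 0,
\]
with $h'$ (first power) in the numerator rather than $[h']^2$; the correct consequence of exponential concavity is $h''(s)\ge h'(s)$, not $h''(s)\ge [h'(s)]^2$. From $h''\ge h'$ and $\lambda\ge 1$ one cannot deduce $\lambda\, h''\ge (h')^2$, and the counterexample above confirms that the conclusion itself fails. Your instinct that the growth bound $\lambda f'(\lambda)\ge 1$ is where ``the real work lies'' was exactly right; that step is simply not available in general.
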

\begin{proof}
We want to prove that
$h''(s)h(s)-(h'(s))^2\geq 0$,
that is a characterizing property of log-convex functions.

Since $h(f(\lambda)))=\lambda$,
we have that
\begin{eqnarray*}
    f'(\lambda)&=&\frac{1}{h'(f(\lambda))} \\
    f''(\lambda)&=&-\frac{h''(f(\lambda))}{\left[h'(f(\lambda))\right]^3}
\end{eqnarray*}
where $h'(f(\lambda))> 0$ as  $h'(f(\lambda))=1/f'(\lambda)$ and $f$ is a Bernstein function.
From Lemma \ref{lemma_exp} we have that $f''(\lambda)+\left(f'(\lambda) \right)^2\leq 0, \, \forall \lambda$, that, written in terms of the derivatives of $h$, gives
\begin{align}-\frac{h''(f(\lambda))-\left[h'(f(\lambda))\right]^2}{\left[h'(f(\lambda))\right]^3}\leq 0.\end{align}
Since $h$ is a non-decreasing function, we conclude that
\begin{align}
    h''(f(\lambda))-\left[h'(f\lambda))\right]^2\geq 0
\end{align}
and thus
\begin{align}
    h''(f(\lambda))h(f(\lambda))-\left[h'(f(\lambda))\right]^2=h''(f(\lambda))\lambda-\left[h'(f(\lambda))\right]^2\geq 0, \quad \forall \lambda \geq 1
\end{align}
proving that $h$ is eventually log-convex with $m=f(1)$.
\end{proof}

\subsection{\label{sec:level5b}Bernstein-Gamma function and its analogue for inverse Bernstein functions}

In the following we write, for a function $\phi$ as in Section \ref{sec:level2},
\begin{equation}
\label{eq:product-Wphi}
W_\phi(1)=1, \qquad W_{\phi}(n)=\prod_{k=1}^{n-1} \phi(k), \quad n \in \{2,3,\dots\}.
\end{equation}
In particular, if $\phi\in\mathcal{BF}$, then  $W_{\phi}$ stands for the unique solution, in the space of positive definite functions, to the functional equation
\begin{align}\label{equazionegamma}
    W_{\phi}(n+1)=\phi(n)W_{\phi}(n)
\end{align}
with $W_{\phi}(1)=1$, and we refer to \cite{patie2018bernstein, MR4320772, hirsch_yor} for a thorough account on this set of functions that generalizes the gamma function, which appears as a special case when $\phi(n)=n$.
Further, Theorem 4.1 of \cite{webster} extends $W_\phi$ to the real line. 

If $\phi$ is an eventually log-convex function such that \cite{hooshmand2022remarks}
\begin{equation}
\label{cond_webster}
\lim_{n\rightarrow \infty} \frac{\phi(n)}{\phi(n+1)}=1,  
\end{equation}
then, the following theorem enables us to extend $W_\phi$ to the real line as well.

\begin{thm}\label{webster_conv}
Let $\phi$ be an eventually log-convex
function  on $\mathbb  R^+$ satisfying condition \eqref{cond_webster}. Then, there exists a function $\widetilde W_\phi$ satisfying the functional equation $\widetilde W_\phi(x +1) = \phi(x)\widetilde W_\phi (x)$, $x \in \mathbb R_+$,
with the initial condition $\widetilde W_\phi(1) = 1$.
Moreover
\begin{equation}\label{w_tilde}
  \widetilde W_\phi(x)=  \lim_{n\rightarrow \infty} \frac{\phi(n)\cdots \phi(1)\phi^x(n)}{\phi(n+x)\cdots \phi(x)}, \qquad x>0.
\end{equation}
\end{thm}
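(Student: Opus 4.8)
The plan is to adapt the classical Bohr–Mollerup argument, as generalized by Webster \cite{webster} and Hooshmand \cite{hooshmand2022remarks}, to the present setting where $\phi$ is only \emph{eventually} log-convex. The first step is to reduce the problem to a genuinely log-convex one: let $m$ be such that $\log\phi$ is convex on $(m,\infty)$, and work initially on that half-line. Any solution $\widetilde W_\phi$ of the recursion is determined on all of $\mathbb R^+$ by its values on any interval of length one together with $\widetilde W_\phi(x+1)=\phi(x)\widetilde W_\phi(x)$, so uniqueness on $(0,\infty)$ will follow once we pin down the solution far out. The natural candidate is the right-hand side of \eqref{w_tilde}; so the two things to prove are (a) that the limit in \eqref{w_tilde} exists and defines a function satisfying the functional equation with $\widetilde W_\phi(1)=1$, and (b) that it is the \emph{only} such function.

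For existence, I would first check the functional equation and normalization formally on the candidate
\[
  G_n(x)=\frac{\phi(n)\cdots\phi(1)\,\phi^x(n)}{\phi(n+x)\cdots\phi(x)}:
\]
replacing $x$ by $x+1$ telescopes the finite product in the denominator and shifts one factor of $\phi(n)^x$, and one reads off $G_n(x+1)/G_n(x)=\phi(x)\cdot\phi(n)/\phi(n+x)$, which tends to $\phi(x)$ by \eqref{cond_webster}; and $G_n(1)=1$ identically. So if the limit exists it automatically satisfies the claimed properties. Convergence of $G_n(x)$ is where log-convexity enters. Writing $L=\log\phi$, we have $\log G_n(x)=\sum_{k=1}^{n}L(k)+xL(n)-\sum_{k=0}^{n}L(x+k)$; I would estimate the increment $\log G_{n+1}(x)-\log G_n(x)=L(n+1)+x\big(L(n+1)-L(n)\big)-L(x+n+1)$ and show the tail is summable. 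For $0<x<1$ (to which the general case reduces via the recursion), convexity of $L$ on $(m,\infty)$ gives, for $n>m$, two-sided bounds of the form $x\big(L(n+1)-L(n)\big)\le L(n+x)-L(n)\le \big(L(n+x+1)-L(n+1)\big)x$ — i.e.\ the familiar monotonicity of difference quotients — which sandwiches $\log G_{n+1}(x)-\log G_n(x)$ between two consecutive terms of the telescoping sequence $x(L(n+1)-L(n))$, forcing monotone convergence once \eqref{cond_webster} guarantees $L(n+1)-L(n)\to 0$. This part is essentially Webster's Theorem 4.1 with the convexity used only beyond $m$.

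For uniqueness, suppose $\widetilde W_\phi$ and $\widetilde V$ both solve the recursion with value $1$ at $1$; then $Q=\widetilde W_\phi/\widetilde V$ is periodic with period $1$ and positive. The standard Bohr–Mollerup trick is to use log-convexity of the solution itself to squeeze $Q$ to a constant: from the functional equation, $\log\widetilde W_\phi(x+n)=\sum_{k=0}^{n-1}L(x+k)+\log\widetilde W_\phi(x)$, and for large $n$ the convexity of $L$ on $(m,\infty)$ sandwiches $\log\widetilde W_\phi(x+n)$ between affine expressions in $x$ whose slopes both tend to $L(n)\sim L(n+1)$, so the same sandwiching applies to $\widetilde V$; dividing, $\log Q(x)$ is trapped in an interval of length $o(1)$ as $n\to\infty$, hence $Q\equiv Q(1)=1$. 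The main obstacle I anticipate is bookkeeping the ``eventual'' nature of the hypothesis cleanly: every convexity inequality is valid only for arguments exceeding $m$, so all the sandwich estimates must be run with the shift parameter large enough, and one must separately verify that the value on $(0,1]$ (equivalently, the constancy of the period-one function $Q$) is not disturbed by the finitely many ``bad'' factors $\phi(1),\dots,\phi(\lceil m\rceil)$ — which is exactly why the conclusion of Lemma \ref{lemma_log} is only ``eventually'' log-convex and why \eqref{cond_webster} is indispensable to kill the ratio $\phi(n)/\phi(n+x)$ in the limit.
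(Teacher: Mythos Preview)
Your approach is exactly the paper's: its entire proof is the single sentence ``follows the lines of Theorem~4.1 in \cite{webster}, properly adapted to the case of log-convex functions,'' so your detailed Bohr--Mollerup/Webster sketch is precisely what is intended, and the existence half is fine. One small slip: $G_n(1)=\phi(n)/\phi(n+1)$ rather than $1$ identically (just as in the classical Euler product $n!\,n^x/\bigl(x(x+1)\cdots(x+n)\bigr)$ for $\Gamma$), so $\widetilde W_\phi(1)=1$ holds only in the limit, via \eqref{cond_webster}; similarly $G_n(x+1)/G_n(x)=\phi(x)\,\phi(n)/\phi(n+x+1)$.

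There is, however, a genuine gap in your uniqueness paragraph. You take two \emph{arbitrary} solutions $\widetilde W_\phi,\widetilde V$ and try to squeeze $Q=\widetilde W_\phi/\widetilde V$ using only convexity of $L=\log\phi$. But the recursion alone determines a solution only up to an arbitrary positive period-one factor, and nothing in your sandwich forbids such a factor: the identity $\log\widetilde V(x+n)=\sum_{k=0}^{n-1}L(x+k)+\log\widetilde V(x)$ carries $\log\widetilde V(x)$ along unchanged for every $n$, so the affine bounds you derive from convexity of $L$ apply equally to both solutions and cancel in the ratio, leaving $\log Q(x)$ untouched. Webster's Theorem~4.1 in fact asserts uniqueness only within the class of \emph{eventually log-convex} solutions, and the Bohr--Mollerup squeeze uses log-convexity of the \emph{solution} (slopes of chords of $\log\widetilde W_\phi$ at consecutive integers), not merely of $\phi$. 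The paper's statement suppresses this hypothesis, but you should restore it: first show that the limit in \eqref{w_tilde} is itself eventually log-convex, and then run the squeeze assuming the competing solution is as well.
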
   
\begin{proof}
The proof follows  the lines of Theorem 4.1 in \cite{webster}, properly adapted to the case of log-convex functions as sketched below.\\
To prove the existence of $ \widetilde W_\phi(x)$, let us define for every $n\in \mathbb N$ the functions $\widetilde W_n:\mathbb R^+ \rightarrow \mathbb R^+$ such that
$$\widetilde W_n(x)=\frac{\phi(n)\cdots \phi(1)\phi^x(n)}{\phi(n+x)\cdots \phi(x)}, \qquad x>0.$$
It is easy to show the following recurrence relations: 
\begin{equation}   
\label{rel:Wn}
\widetilde W_{n+1}(x)=\frac{\phi^{x+1}(n+1)}{\phi(n+1+x)\phi^x(n)}\widetilde W_n(x), 
\qquad 
\widetilde W_{n}(x+1)=\frac{\phi(n)\phi(x)}{\phi(n+1+x)}\widetilde W_n(x).
\end{equation}
By hypothesis, $\phi$ is log-convex in $(m,\infty)$ for some $m\geq 0$. Then, from \eqref{logconv_3} with $z=n+x+1$, $x=n$, $y=n+1$ and $n>m$ we have
$$\phi^{x+1}(n+1) \leq \phi^x(n)\phi(n+x+1)$$
showing that the sequence $(\widetilde W_n)_{n\in \mathbb N}$ is eventually decreasing. Moreover, the sequence is bounded below by zero, then, by the monotone convergence theorem, it converges.
Hence, the function in \eqref{w_tilde} is well-posed and satisfies 
$\widetilde W_\phi(x +1) = \phi(x)\widetilde W_\phi (x)$ 
%for $x\in (0,1]$ 
(see also Lemma 2.1 of \cite{hooshmand2022remarks}).
%Moreover, using the second equation of \eqref{rel:Wn} we extend $\widetilde W_\phi$ to the positive real numbers.
Finally, the initial condition is
\begin{equation}
    \widetilde W(1)=\lim_{n\rightarrow \infty} \frac{\phi(n) \cdots \phi(1)\phi(n)}{\phi(n+1)\cdots \phi(1)}=
    \lim_{n\rightarrow \infty} \frac{\phi(n) }{\phi(n+1)}=1,
\end{equation}
where the last equality follows from \eqref{cond_webster}.
\end{proof}

\begin{rmk}
    Recalling Lemma \ref{lemma_log}, we have that inverses of exponentially-concave $\mathcal{BF}^0$ functions, such that $\Lambda=\infty$,  satisfy Theorem \ref{webster_conv}. 
\end{rmk}
\begin{rmk}
    The solution of \eqref{equazionegamma} for both $\phi$ convex or concave is in general not unique, see for instance \cite{webster} and references therein.  
\end{rmk}

\section{Some properties of the eCOM-Poisson distribution}\label{sec:level4b}

Let us consider a discrete random variable $X$ with eCOM-Poisson distribution with characteristic couple $(\rho,\phi)$.
We first observe that
the  ratio of successive probabilities 
\begin{align}\label{ratedecay}
    \frac{\mathbb P(X=n-1)}{\mathbb P(X=n)}=\frac{\phi(n)}{\rho}, \qquad n \in \mathbb{N}^*,
\end{align}
gives us the rate of decay of the tail of the distribution.
Moreover, we also have
\begin{align}
    \frac{\mathbb P(X=n-s)}{\mathbb P(X=n)}=\frac{\phi(n)\phi(n-1)\dots\phi(n-s+1)}{\rho^s}, \qquad n \ge s.
\end{align}

To evaluate the moments of $X$, following \cite{macci_pacchiarotti_villa_2022} (see also \cite{johnson}) where a power series distribution is considered, we make use of the probability generating function of $X$, i.e. 
\begin{equation}\label{eq:pgf}
\mathbb{E}u^X=\frac{Z(u\rho,\phi)}{Z(\rho,\phi)}, \qquad  |u|\leq 1.
\end{equation}
\begin{rmk}
    If $\phi\in \mathcal{BF}$ expression \eqref{eq:pgf} can be written as 
    \begin{eqnarray}
        \mathbb Eu^X= \frac{ \  _1F_{1}(1;\phi;\rho u)}{_1F_{1}(1;\phi;\rho) },
    \end{eqnarray}
    where ${}_1F_1(1;\phi;\cdot)$ is the extension of the hypergeometric function ${}_1F_1$ given in \cite{DPS2022}. From this expression one can calculate summary statistics of $X$ using properties of this function.
    Note the analogy with the case of the COM-Poisson probability generating function (see \cite{nadarajah2009useful}).
\end{rmk}

The factorial moments of order $s\in \mathbb N^*$ of $X$ are given by
\begin{equation}
    m_s =\frac{\mathrm d^s}{\mathrm du^s}\mathbb Eu^X\Big |_{u=1}
    =\mathbb E\left[X(X-1)(X-2)\cdot \ldots \cdot (X-s+1) \right]
    =(-1)^s\mathbb E(-X)_s,
\end{equation}
where $(y)_s = y(y+1)\cdot \dots \cdot (y+s-1)$ is the rising factorial (also known as the Pochhammer symbol).
%which exists in some neighborhood of $u=1$. 
Now, by virtue of the Vi\`ete--Girard formulae, that relate the coefficients of a polynomial to sums and products of its roots, we expand  $X(X-1)(X-2)\cdots (X-s+1)$ as
\begin{align}
    m_s=\sum_{r=1}^s(-1)^{s-r} e_{r} \, \mathbb{E}X^r,
\end{align}
where
\begin{align}
    e_{r}=e_{r}(l_1,l_2,\ldots,l_r)=\sum_{1\leq l_1\leq l_2\leq \cdots \leq l_r \leq s}l_1l_2\cdot \ldots \cdot l_r.
\end{align}
By using the relation between moments and factorial moments we have
\begin{align}
    \mathbb{E}X^s=\sum_{j=0}^{s}(-1)^j\stirling{s}{j}\mathbb{E}(-X)_j
\end{align}
where $\stirling{s}{j}=\frac{1}{j}\sum_{m=0}^{j}(-1)^{j-m}\binom{j}{m}m^s $
are the Stirling numbers of the second kind. Considering that
\begin{align}
    \mathbb E(-X)_j= \frac{1}{Z(\rho,\phi)}\sum_{k=1}^{\infty}(-k)_j\frac{\rho^k}{\prod_{i=1}^k \phi(i)},
\end{align}
in which we used that $(0)_j = 0$,
it follows
\begin{equation}  \label{moments}
\mathbb EX^s=\frac{1}{Z(\rho,\phi)}\sum_{j=0}^{s}(-1)^j\stirling{s}{j}\sum_{k=1}^{\infty}(-k)_j\frac{\rho^k}{\prod_{i=1}^k \phi(i)}, \qquad s \in \mathbb{N}^*.
\end{equation}
Writing $(-k)_j=(-1)^jk!/(k-j)!$, we identify in \eqref{moments} the operator $(\rho D)$  characterized by the well known formula 
\cite{HWG78}
\begin{equation}\label{s_derivative}
    (\rho D)^sf(\rho)=\sum_{j=0}^s \stirling{s}{j}\rho^jD^jf(\rho),
\end{equation}
where $D=\mathrm d/\mathrm d\rho$, and $f$ is a suitable function.
The previous steps prove the following proposition.
\begin{Proposition}\label{prop_moments}
    Let $X \sim \text{eCOM-Poisson($\rho, \phi$)}$. Then,
    \begin{eqnarray} 
      \mathbb EX^s &=& \frac{1}{Z(\rho,\phi)}(\rho D)^s Z(\rho,\phi), \quad s\in \mathbb N^*,  \\
      m_s &=&\frac{1}{Z(\rho,\phi)}\rho^s D^s Z(\rho,\phi), \quad s\in \mathbb N^*.
    \end{eqnarray}
\end{Proposition}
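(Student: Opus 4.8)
The plan is to establish the two formulas essentially by unwinding the computation that precedes the statement, so the proof is mostly a matter of reorganizing what has already been derived. The cleaner starting point is the factorial moment identity $m_s = \frac{\mathrm d^s}{\mathrm du^s}\mathbb Eu^X\big|_{u=1}$ together with the closed form $\mathbb E u^X = Z(u\rho,\phi)/Z(\rho,\phi)$ from \eqref{eq:pgf}. Differentiating $s$ times in $u$ and using the chain rule, each derivative in $u$ brings down a factor $\rho$ and a derivative $D = \mathrm d/\mathrm d\rho$ acting on $Z(\cdot,\phi)$ evaluated at $u\rho$; after setting $u=1$ this yields $m_s = \frac{1}{Z(\rho,\phi)}\rho^s D^s Z(\rho,\phi)$ directly. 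One should check that term-by-term differentiation of the series $Z$ is legitimate, which follows from the absolute convergence guaranteed by \eqref{cond_conv} on compact subsets of the domain of $\rho$; this is the only analytic point to verify and it is routine.

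For the raw moments, I would combine the already-established expansion \eqref{moments}, namely $\mathbb E X^s = \frac{1}{Z(\rho,\phi)}\sum_{j=0}^s (-1)^j \stirling{s}{j}\sum_{k=1}^\infty (-k)_j \frac{\rho^k}{\prod_{i=1}^k\phi(i)}$, with the rewriting $(-k)_j = (-1)^j k!/(k-j)!$ noted in the text. The inner sum then becomes $\sum_{k=1}^\infty \frac{k!}{(k-j)!}\frac{\rho^k}{\prod_{i=1}^k\phi(i)}$, and since $\frac{k!}{(k-j)!}\rho^k = \rho^j D^j \rho^k$ (with the convention that terms with $k<j$ vanish, consistent with $(0)_j=0$), this inner sum equals $\rho^j D^j Z(\rho,\phi)$. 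Substituting back gives $\mathbb E X^s = \frac{1}{Z(\rho,\phi)}\sum_{j=0}^s \stirling{s}{j}\rho^j D^j Z(\rho,\phi)$, which is exactly $\frac{1}{Z(\rho,\phi)}(\rho D)^s Z(\rho,\phi)$ by the operator identity \eqref{s_derivative} attributed to \cite{HWG78}. I would present this as the main line of argument and note that, alternatively, the raw-moment formula follows from the factorial-moment one via $\mathbb E X^s = \sum_j (-1)^j \stirling{s}{j}\mathbb E(-X)_j$ and the definition of $(\rho D)^s$.

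There is no serious obstacle here; the proposition is a consequence of identities already assembled in the preceding pages, and the ``proof'' is really a matter of recognizing the Scherk--Stirling operator $(\rho D)^s$ and the elementary operator $\rho^s D^s$ in the expressions that have been written out. The one place requiring a word of care is the interchange of the infinite sum defining $Z$ with the differential operators $D^j$: this is justified because $Z(\zeta,\phi)$ is an analytic (indeed, on the relevant domain, a convergent power series) function of its first argument on $(0,\Lambda')$ for the appropriate $\Lambda'$, so $D^j Z$ may be computed termwise. I would therefore structure the write-up as: (i) recall \eqref{eq:pgf}; (ii) differentiate to get $m_s$; (iii) feed this into the moment--factorial-moment relation, or equivalently invoke \eqref{moments} and \eqref{s_derivative}, to get $\mathbb E X^s$; (iv) remark on the convergence justification. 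This keeps the argument short and makes transparent that the content of the proposition is the compact operator form, not any new analysis.
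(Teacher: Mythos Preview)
Your proposal is correct and follows essentially the same route as the paper: the paper's ``proof'' is precisely the block of computations preceding the proposition (it writes ``the previous steps prove the following proposition''), namely the derivation of \eqref{moments} from the moment--factorial-moment relation together with the identification of $(\rho D)^s$ via \eqref{s_derivative}, which is exactly what you do for $\mathbb{E}X^s$. Your treatment of $m_s$ by direct chain-rule differentiation of \eqref{eq:pgf} is slightly more direct than what the paper writes out (the paper obtains it implicitly from the same $(-k)_j=(-1)^j k!/(k-j)!$ rewriting), and your added remark on termwise differentiability is a small but welcome addition that the paper leaves tacit.
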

In general, the normalizing function $Z(\rho,\phi)$ does not permit neat closed-form expressions for the statistical quantities related to this distribution. However, asymptotic results on the product $\prod_{i=1}^k \phi(i)$ are available in the case of Bernstein functions, see Theorem 4.2 in \cite{patie2018bernstein} (see also \cite{minchev2023asymptotics}).
A closed form for a sort of a generalization of the factorial moments is presented in the following remark.
\begin{rmk}
    For $X\sim\text{eCOM-Poisson($\rho,\phi$)}$, by direct calculation, we have
    \begin{align}\label{phifac}
        {}_\phi m_s = \mathbb{E}\bigl[ \phi(X)\phi(X-1)\dots \phi(X-s+1) \bigr] = \rho^s, \qquad s \in \mathbb{N}^*,
    \end{align}
    which remarkably coincide with the factorial moments of a Poisson random variable of parameter $\rho$.
    Further, \eqref{phifac}  specialize to the factorial moments if $\phi$ is the identity function (Poisson case). Further, if $s=1$ we have
    \begin{align}\label{phifac2}
        \mathbb{E} \phi(X) = \rho.
    \end{align}
\end{rmk}

\begin{rmk}
    By the extended Markov's inequality for arbitrary non-negative and non-decreasing functions, we have, for every $a>0$,
    \begin{align}
        \mathbb{P} (X \ge a) \le \frac{\mathbb{E}\phi(X)}{\phi(a)} = \frac{\rho}{\phi(a)},
    \end{align}
    where we have used $\eqref{phifac2}$. For $a \in \mathbb{N}^*$, considering \eqref{ratedecay}, we have
    \begin{align}
        \mathbb{P}(X \ge a) \le \frac{\mathbb{P}(X=a)}{\mathbb{P}(X=a-1)}.
    \end{align}    
\end{rmk}

In the following we present a few examples of eCOM-Poisson distributions for different choices of the function $\phi$  belonging to either the class of Bernstein functions or that of their  inverses.

\begin{exmp} \label{ex_1}
Let $X \sim \text{eCOM-Poisson($\rho, \phi$)}$ such that $\rho \in (0,1)$ and $\phi$ is the Bernstein function $\phi(r)=\frac{r}{r+1}$.
Condition $\rho \in (0,1)$ guarantees that \eqref{cond_conv} is satisfied.
Then $\prod_{r=1}^{n}\phi(r)=\frac{1}{n+1}$ and  $Z(\rho,\phi)=\sum_{n=0}^{\infty}(n+1)\rho^n={(1-\rho)^{-2}}$.
From Proposition \ref{prop_moments}, we obtain

\begin{align*}
\mathbb EX^s & = (1-\rho)^2(\rho D)^s(1-\rho)^{-2} 
= (1-\rho)^2 (\rho D)^s \sum_{j=1}^\infty j\rho^{j-1} \\
%= (1-\rho)^2 \sum_{j=1}^\infty j(j-1)^s \rho^{j-1} 
&= (1-\rho)^2 \sum_{j=0}^\infty (j+1)j^s \rho^{j} 
%& = (1-\rho)^2 \left[ \sum_{j=1}^\infty j^{s+1} \rho^j + \sum_{j=1}^\infty j^s \rho^j \right] \\
= (1-\rho)^2 [\text{Li}_{-s-1}(\rho) + \text{Li}_{-s}(\rho)],
\end{align*}
where  $\text{Li}_{-s}$ is the polylogarithm function. 
In particular, the first  moment of $X$ and its variance  read 
 \begin{equation}
   \mathbb EX=\frac{2\rho}{1-\rho}, \qquad 
   \mathbb V\text{ar} X=\frac{2\rho}{(1-\rho)^2}.
 \end{equation}
 \end{exmp}
 \begin{exmp}
Consider the inverse Bernstein function $\phi(r)=(r+1)^2-1=r(r+2).$ Then $\prod_{r=1}^{n}\phi(r)=\frac{n!(n+2)!}{2}$ and 
Proposition \ref{prop_moments} can be applied for $Z(\rho,\phi)=\sum_{n=0}^{\infty}\frac{2\rho^n}{(n+1)(n+2)n!^2}.$ 
Clearly, condition \eqref{cond_conv} holds true.
This function satisfies the Euler differential equation of the second order:
\begin{equation}
    \rho^2 S''(\rho)+4\rho S'(\rho)+2S(\rho)=2C_0(\rho),
\end{equation}
where $C_0(\rho)=\sum_{n=0}^{\infty}\frac{\rho^n}{n!^2}$ is the Tricomi function.
\end{exmp}

\begin{exmp}[COM-Poisson]
Consider the function $\phi(r)=r^\delta$, for $\delta >0$.
Then $\prod_{r=1}^{n}\phi(r)=n!^\delta$, \ $Z(\rho,\phi)=\sum_{n=0}^{\infty}\frac{\rho^n}{n!^\delta}=C_\delta(\rho)$ is the Le Roy function, introduced in \cite{leroy} (see also \cite{MR3171991,gerhold2012asymptotics,simon2022remark} for a more general Le Roy-type function).
Note that if $\delta \in (0,1)$, then $\phi(r)\in \mathcal{BF}$,  if $\delta >1$, then $\phi(r)\in \mathcal{IBF}$.
There does not exist closed form formulae for the moments of the COM-Poisson distribution, although recurrent relations \cite{daly}, \cite{Shmueli2005} or asymptotic expansions \cite{gaunt2019asymptotic} are available. An alternative formula can be obtained from Proposition \ref{prop_moments}: 
\begin{align}
    \mathbb{E}X^s=\frac{(\rho D)^s C_\delta(\rho)}{C_\delta(\rho)}.
\end{align}
\end{exmp}

\begin{exmp}
Consider the Bernstein function $\phi(r)=\frac{r+a}{r+b}$, $a,b>0$.
Then, $\prod_{r=0}^{n}\phi(r)=\frac{(a)_{n+1}}{(b)_{n+1}}$  and
\begin{equation}
    Z(\rho,\phi)=\frac{b}{a}\sum_{n=0}^{\infty}\frac{(1)_n (b+1)_n}{(a+1)_n}\frac{\rho^n}{n!}=\frac{b}{a}\ _2F_1(1,b+1;a+1;\rho),
\end{equation}
where ${}_2F_1$ is the Gauss hypergeometric function.
From Proposition \ref{prop_moments}, we obtain
\begin{eqnarray}
    \mathbb EX^s
 =\frac{(\rho D)^s \ _2F_1(1,b+1;a+1;\rho)}{_2F_1(1,b+1;a+1;\rho)},
\end{eqnarray}
and using the well-known differentiation formula (see 15.2.2 in \cite{abramowitz1968handbook}):
\begin{equation}
 D^j  {}_2F_1(\mu,\nu;\lambda;\rho)=\frac{(\mu)_j (\nu)_j}{(\lambda)_j}\\ _2F_1(\mu+j,\nu+j;\lambda+j;\rho),  
\end{equation}
we get the expression for the factorial moments
\begin{eqnarray}
    m_s=\rho^s\frac{ (1)_s (b+1)_s}{(a+1)_s} \frac{_2F_1(1+s,b+1+s;a+1+s;\rho)}{  _2F_1(\mu+j,\nu+j;\lambda+j;\rho)}.    
\end{eqnarray}
\end{exmp}

Finally, we consider an example in which 
$\phi$ is neither a Bernstein nor an inverse Bernstein function.

\begin{exmp}
Consider the function $\phi(r)=\frac{r^2+r+1}{r^2-r+1}$. For $\rho \in (0,1)$ condition \eqref{cond_conv} is satisfied.
Then, $\prod_{r=1}^{n}\phi(r)=n^2+n+1 $ and
\begin{align}
    Z(\rho,\phi)=\sum_{n=0}^{\infty}\frac{\rho^{n}}{n^2+n+1}=\sum_{n=0}^{\infty}\frac{\rho^{n}}{(n+1/2)^2+3/4}.
\end{align}
The latter quantity is a Mathieu series \cite{tomovskipoganj2011}. Consider formula (5) in (\cite{watson1966handbook}, page 386), due to Gegenbauer,
\begin{equation}
    \frac{1}{(p^2+a^2)^{\mu}}=\frac{\sqrt{\pi}}{(2a)^{\mu-1/2}\Gamma(\mu)}\int_0^{\infty}e^{-px}x^{\mu-1/2}J_{\mu-1/2}(ax) \mathrm dx \qquad a\in\mathbb{R},\mu,p>0,
\end{equation}
where $J_{\mu-1/2}(ax)$ is a Bessel function of the first kind.
Taking $p=n+1/2, \mu=1, a=\frac{\sqrt 3}{2},$ we get
\begin{equation}
    \frac{1}{(n+1/2)^2+\frac{3}{4}}=3^{-1/4}\sqrt{\pi}\int_0^{\infty}e^{-(n+1/2)x} \sqrt{x} \, J_{1/2}\left({\frac{\sqrt 3}{2}x}\right) \mathrm dx, \qquad n \in\mathbb{N}.
\end{equation}
From here, we get the following integral representation
\begin{equation}
 Z(\rho,\phi)=\frac{2}{\sqrt 3}\int_0^{\infty}\frac{ e^{\frac{x}{2}}}{e^x-\rho} \sin\left({\frac{\sqrt 3}{2}}x\right) \mathrm dx.
\end{equation}
\end{exmp}

%%%%%%%%%%%%%%%%%%%%%%%%%%%%%%%%%%%%%%%%%%%
%%%%%%%%%%%%%%%%%%%%%%%%%%%%%%%%%%%%%%%%%%%\\

\section{Dispersion properties}\label{disssp}

In this section we consider $\Lambda=\infty$. Recall 
the Poisson probability mass function  $P_n(\rho)$, $\rho >0$, reads
\begin{equation}
P_n^{\textup{id}}(\rho)=P_n(\rho)=\frac{\rho^n}{n!}e^{-\rho},\qquad n\in \mathbb{N},
\end{equation}
where $\textup{id}$ is the identity function.
Clearly,
\begin{equation}
 P_n^{\phi}(\rho)=\frac{\frac{  P_n(\rho)}{\prod_{j=1}^n \phi(j)}}{\sum_{k=0}^{\infty}\frac{ P_k(\rho)}{\prod_{j=1}^k \phi(j)}}, \qquad n\in \mathbb{N},   
\end{equation}
and hence it can be viewed as a weighted Poisson distribution,
\begin{align}
    P_n^{\phi}(\rho) = \frac{\rho^n w(n)e^{-\rho}}{n!\,\mathbb{E}w(Y)}, \qquad n \in \mathbb{N},
\end{align}
where $Y\sim \text{Poisson}(\rho)$ and $w(n) = n! / \prod_{j=1}^n \phi(j)$ is the weight function. Note that $w(n)$ does not depend on $\rho$.

In order to study overdispersion and underdispersion properties of the model we consider Theorem 3 of \cite{MR2388011} together with the corollary following Theorem 4 of the same paper, and thus we study log-convexity of the weight function $w(n)$, that is convexity of
\begin{align}
    %\eta(n) = 
    \log w(n) = \log \frac{n!}{\prod_{k=1}^n \phi(k)}.
\end{align}
It is well-known that a sequence of real numbers $(a_n)_{n \in \mathbb{N}}$ is convex if it satisfies the inequality
\begin{align}
    2 a_n \le a_{n-1} + a_{n+1}, \qquad \text{for every }n \in \mathbb{N}^*.
\end{align}
Hence, $w(n)$ is log-convex, and hence the model exhibits overdispersion, if
\begin{align}
    2 \log \frac{n!}{\prod_{k=1}^n \phi(k)} - \log \frac{(n-1)!}{\prod_{k=1}^{n-1} \phi(k)} - \log \frac{(n+1)!}{\prod_{k=1}^{n+1} \phi(k)} \le 0, \qquad \text{for every }n \in  \mathbb{N}^*.
\end{align}
This is equivalent to
\begin{align}
    \log \biggl(\frac{n}{n+1} \frac{\phi(n+1)}{\phi(n)}\biggr)\le 0, \qquad \text{for every }n \in  \mathbb{N}^*,
\end{align}
leading, in turn, to
\begin{align}
\label{condition1}
    \frac{n+1}{n} \ge \frac{\phi(n+1)}{\phi(n)}, \qquad \text{for every }n \in  \mathbb{N}^*.
\end{align}
Equivalently, for the dispersion function $d(\lambda) = \lambda/\phi(\lambda)$, $\lambda \in (0,\infty)$,
\begin{align}
    d(n+1) \ge d(n), \qquad \text{for every }n \in  \mathbb{N}^*,
\end{align}
i.e.\ $d(n)$ is non-decreasing. This leads to the following theorem.

\begin{thm}\label{th:ecom_disp}
    Let $X$ be the random variable with probability mass function \eqref{pn_COM}. Then $X$ is overdispersed (underdispersed) if $d(n) = n/\phi(n)$ is non-decreasing (non-increasing).
    
\end{thm}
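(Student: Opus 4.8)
The plan is to use the representation of $P_n^\phi(\rho)$ as a weighted Poisson law, already derived above, and then to invoke the dispersion criterion for weighted Poisson distributions. Concretely, write
\[
P_n^{\phi}(\rho) = \frac{\rho^n w(n)e^{-\rho}}{n!\,\mathbb{E}w(Y)}, \qquad Y\sim\text{Poisson}(\rho), \qquad w(n) = \frac{n!}{\prod_{j=1}^n \phi(j)},
\]
noting that $w$ is a non-negative sequence not depending on $\rho$, and that $\mathbb{E}w(Y)<\infty$ is precisely condition \eqref{cond_conv} with $\Lambda=\infty$, so the weighted law is well defined. By Theorem 3 of \cite{MR2388011} together with the corollary following Theorem 4 of the same paper, such a weighted Poisson distribution is overdispersed (relative to the Poisson, hence with variance-to-mean ratio exceeding one) when $w$ is log-convex as a sequence, and underdispersed when $w$ is log-concave.

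It then remains to translate log-convexity of $w$ into a condition on $\phi$. Recall that a real sequence $(a_n)_{n\in\mathbb{N}}$ is convex iff $2a_n \le a_{n-1}+a_{n+1}$ for every $n\in\mathbb{N}^*$. Applying this with $a_n = \log w(n)$ and using the telescoping of the products $\prod_{k=1}^n\phi(k)$ and the cancellation of the factorials, the inequality reduces to $\log\!\bigl(\tfrac{n}{n+1}\tfrac{\phi(n+1)}{\phi(n)}\bigr)\le 0$ for every $n\in\mathbb{N}^*$, i.e.\ to $\tfrac{\phi(n+1)}{\phi(n)}\le \tfrac{n+1}{n}$, which is exactly $d(n+1)\ge d(n)$ for the dispersion function $d(\lambda)=\lambda/\phi(\lambda)$. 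Hence $w$ is log-convex iff $d(n)=n/\phi(n)$ is non-decreasing; reversing every inequality, $w$ is log-concave iff $d(n)$ is non-increasing. Combining these equivalences with the criterion recalled above gives the statement.

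The argument is essentially the chain of equivalences displayed in the paragraphs preceding the theorem, so no serious obstacle is expected. The only point requiring care is ensuring the hypotheses of the cited results from \cite{MR2388011} are met — positivity of the weight and finiteness of $\mathbb{E}w(Y)$ — both of which are immediate here, the latter being \eqref{cond_conv}.
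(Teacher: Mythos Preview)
Your proposal is correct and follows exactly the same route as the paper: represent the law as a weighted Poisson, invoke the log-convexity/log-concavity criterion from \cite{MR2388011}, and reduce the discrete convexity inequality for $\log w(n)$ to the monotonicity of $d(n)=n/\phi(n)$. The chain of equivalences you spell out is precisely the derivation displayed in the paragraphs preceding the theorem.
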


\begin{cor}
    \label{cor_sbf}
%In particular, since $n/\phi(n)$ is always non-decreasing if $\phi$ is Bernstein and always
%non-increasing if $\phi$ is inverse Bernstein,
%then, 
If $\phi \in \mathcal{BF}$ (resp.\ $\mathcal{IBF}$), the random variable $X$ is overdispersed (resp.\ underdispersed). 
\end{cor}

\begin{cor}\label{cor_ndecr} 
    If $\phi \in \mathcal{C}^1$ is non-decreasing, then $X$ is overdispersed (underdispersed) if for every $\lambda \in (0,\infty)$, $\phi' (\lambda)\le (\ge) \: \phi(\lambda)/\lambda$.
\end{cor}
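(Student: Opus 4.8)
The plan is to deduce the statement directly from Theorem \ref{th:ecom_disp} by translating the pointwise condition on $\phi$ into monotonicity of the dispersion function $d(\lambda)=\lambda/\phi(\lambda)$. Since $\phi\in\mathcal C^1$ and $\phi>0$ on $(0,\infty)$, the function $d$ is differentiable on $(0,\infty)$ and the quotient rule gives
\[
d'(\lambda)=\frac{\phi(\lambda)-\lambda\,\phi'(\lambda)}{\phi(\lambda)^2},\qquad \lambda\in(0,\infty).
\]
Thus $d'(\lambda)\ge 0$ for every $\lambda$ if and only if $\phi'(\lambda)\le \phi(\lambda)/\lambda$ for every $\lambda$, and $d'(\lambda)\le 0$ for every $\lambda$ if and only if $\phi'(\lambda)\ge \phi(\lambda)/\lambda$ for every $\lambda$.

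In the first case $d$ is non-decreasing on $(0,\infty)$, hence in particular the sequence $\bigl(d(n)\bigr)_{n\in\mathbb N^*}$ is non-decreasing (apply the mean value theorem on each interval $[n,n+1]$), and Theorem \ref{th:ecom_disp} gives overdispersion. In the second case $d$ is non-increasing on $(0,\infty)$, the sequence $\bigl(d(n)\bigr)_{n\in\mathbb N^*}$ is non-increasing, and Theorem \ref{th:ecom_disp} gives underdispersion. This completes the argument.

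There is essentially no obstacle here: the only computation needed is the derivative of $d$, and the reduction to the integer values handled by Theorem \ref{th:ecom_disp} is immediate. It is worth noting that the pointwise hypothesis on all of $(0,\infty)$ is in fact stronger than strictly necessary, since Theorem \ref{th:ecom_disp} only concerns the values $d(1),d(2),\dots$; the $\mathcal C^1$ assumption simply provides a convenient sufficient criterion. Finally, the hypothesis that $\phi$ be non-decreasing merely places us in the natural regime for count-data models: it is automatic in the underdispersed case, where $\phi'(\lambda)\ge\phi(\lambda)/\lambda>0$ forces $\phi$ to be strictly increasing, and it holds for every Bernstein function by \eqref{deriv_bern} with $n=1$.
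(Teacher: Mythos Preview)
Your proof is correct and follows exactly the route the paper intends: the corollary is stated there without proof, as an immediate consequence of Theorem~\ref{th:ecom_disp}, and your computation of $d'(\lambda)=\bigl(\phi(\lambda)-\lambda\phi'(\lambda)\bigr)/\phi(\lambda)^2$ supplies precisely the missing one-line justification. Your closing remarks on the role of the non-decreasing hypothesis and the sufficiency (rather than necessity) of the pointwise condition are accurate and appropriate.
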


%\begin{cor}\label{cor_sbf}
 %   If $\phi \in \mathcal{SBF}$, the random variable $X$ is overdispersed. 
%\end{cor}
%
%\begin{proof}
 % Since  $\phi \in \mathcal{SBF}$, then also $\lambda/\phi(\lambda)=d(\lambda) \in \mathcal{SBF}$ and then $d(\lambda)$ is non-decreasing. We conclude by means of Theorem \ref{th:ecom_disp}.
 % \end{proof}

%Consider $\mathcal{ISBF}$, the space of inverse functions of special Bernstein functions. We have, in this case, the following corollary. 
%\begin{cor}
 %   If $\phi \in \mathcal{ISBF}$, the random variable $X$ is underdispersed. 
%\end{cor}
%
%\begin{proof}
 %   Since $\phi \in \mathcal{ISBF}$, then there exists a function $g\in \mathcal{SBF}$ such that $g(\phi(\lambda))=\lambda$. Hence,

 %\begin{equation}  \label{eq_inv}   
  % d(\lambda)=\frac{\lambda}{\phi(\lambda)}=
   %\frac{g(\phi(\lambda))}{\phi(\lambda)}.
    %\end{equation}
%Since $g\in \mathcal{SBF}$, then there exists $h \in \mathcal{SBF}$ such that $h(\phi(\lambda))=\phi(\lambda)/{g(\phi(\lambda))}.$
%Then,
%\begin{align}
 %   d(\lambda)=1/{h(\phi(\lambda))}
%\end{align}
%and hence, being $\phi \in \mathcal{ISBF}$, the dispersion function $d$ is non-increasing.
%Finally, by Theorem \ref{th:ecom_disp} we get underdispersion.  
%\end{proof}

\begin{rmk}
    Note that, if $\phi\in \mathcal{SBF}$, so is $d$, and vice versa. Further, consider that $w(n) = \prod_{k=1}^n d(k)$. This last comment  leads us to the duality property in which $w(n)$ and $W_\phi(n) = \prod_{k=1}^n \phi(k)$ can be exchanged.
\end{rmk}

%%%%%%%%%%%%%%%%%%%%%%%%%%%%%%%%%%%%%%
%%%%%%%%%%%%%%%%%%%%%%%%%%%%%%%%%%%%%%
\section{A further extension}\label{extension}

Consider a non–negative random variable $X$ having probability mass function

\begin{eqnarray} \label{pn_COM-gen}
P_n^{\phi}(\rho)=\frac{\Gamma(n+\gamma)\rho^n}{n! V_{\phi}(\alpha n+\beta)}\frac{1}{Z_{\alpha,\beta,\gamma}(\rho,\phi)}, \qquad \alpha, \beta, \gamma > 0,\  n\in \mathbb N,
\end{eqnarray}
where
\begin{equation}
\label{7.2}
 Z_{\alpha,\beta,\gamma}(\rho,\phi)=\sum_{k=0}^{\infty}\frac{\Gamma(k+\gamma)\rho^k}{k!V_{\phi}(\alpha k+\beta)}
\end{equation} 
and $V_\phi$ is either the Bernstein-gamma function $W_\phi$ or  $\widetilde W_\phi$ in Theorem \ref{webster_conv}.
If $\alpha=\beta=\gamma=1$ we will write $Z_{1,1,1}(\rho,\phi)=Z(\rho,\phi)$.

\begin{rmk}
    Regarding the convergence of \eqref{7.2}, by examining the limit of the ratio of consecutive terms and using the definition \eqref{w_tilde}  of $V_\phi$ for real argument,
    we get that the normalization series converges 
    for $0<\rho<(\lim_{x\rightarrow \infty}\phi(x))^\alpha$. 
\end{rmk}

\begin{rmk}
    From \eqref{pn_COM-gen} we retrieve the following known special cases.
    \begin{itemize}
        \item If $\gamma = \beta = \alpha = 1$, $X$ is eCOM-Poisson($\rho,\phi$). 
        \item If $\gamma = \beta = \alpha = 1$, $\phi \equiv \textup{id}$, $X$ is Poisson with parameter $\rho$.
        \item If $\gamma = \beta = \alpha = 1$, $\phi(x) = x^\delta$, with $\delta \in \mathbb{R}_+$, $X$ is COM-Poisson with parameters $\rho$ and $\delta$ \cite{conway1962queuing}.
        \item If  $\gamma = \alpha = 1$, $\phi \equiv \textup{id}$, $X$ has the hyper-Poisson distribution \cite{bardwell1964two} with parameters $\rho$ and $\beta$.
        \item If $\gamma = 1$, $\phi \equiv \textup{id}$, $X$ has the alternative Mittag--Leffler distribution (see  \cite{MR2535014,MR3543682}).
        \item If $\phi \equiv \textup{id}$ we obtain the alternative generalized Mittag-Leffler distribution \cite{pogany2016probability} (see also \cite{tomovski2014laplace} for properties of the generalized Mittag--Leffler function).
    \end{itemize}
    Furthermore, note that  distribution \eqref{pn_COM-gen} for $\phi(x) = x^\delta$, $\delta \in \mathbb{R}_+$, does not coincide  with that in Section 3.1 of \cite{cahoy2021flexible} although both based on the principle of generalizing the factorial by a gamma-type function, and choosing the power function (see also \cite{garra2018note} if in addition $\gamma=1$).
\end{rmk}

\begin{Proposition}\label{prop:factorial_generalized}
  The factorial moments of $X$ write
  \begin{equation}\label{fact}
    m_s=\rho^s\frac{Z_{\alpha,s\alpha+\beta,\gamma+s}(\rho,\phi)}{Z_{\alpha,\beta,\gamma}(\rho,\phi)} , \qquad s\in \mathbb N.  
  \end{equation}
  \end{Proposition}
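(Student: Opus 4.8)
The plan is to compute the $s$-th factorial moment $m_s=\mathbb{E}\bigl[X(X-1)\cdots(X-s+1)\bigr]$ directly from the series \eqref{pn_COM-gen}, exploiting the cancellation of $n!$ between the falling factorial and the denominator of $P^\phi_n(\rho)$. First I would write
\begin{equation*}
m_s=\sum_{n=0}^{\infty} n(n-1)\cdots(n-s+1)\,P_n^\phi(\rho),
\end{equation*}
and use that the falling factorial vanishes for $n<s$ and equals $n!/(n-s)!$ for $n\ge s$. Substituting $P_n^\phi(\rho)=\frac{\Gamma(n+\gamma)\rho^n}{n!\,V_\phi(\alpha n+\beta)}\,\frac{1}{Z_{\alpha,\beta,\gamma}(\rho,\phi)}$ and cancelling $n!$ gives
\begin{equation*}
m_s=\frac{1}{Z_{\alpha,\beta,\gamma}(\rho,\phi)}\sum_{n=s}^{\infty}\frac{\Gamma(n+\gamma)\,\rho^n}{(n-s)!\,V_\phi(\alpha n+\beta)}.
\end{equation*}

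The second step is the index shift $k=n-s$, which turns the sum into
\begin{equation*}
m_s=\frac{\rho^s}{Z_{\alpha,\beta,\gamma}(\rho,\phi)}\sum_{k=0}^{\infty}\frac{\Gamma\bigl(k+(\gamma+s)\bigr)\,\rho^k}{k!\,V_\phi\bigl(\alpha k+(s\alpha+\beta)\bigr)}.
\end{equation*}
The last step is merely to recognize the remaining series: comparing with the definition of $Z_{\alpha,\beta,\gamma}(\rho,\phi)$ with $\beta$ replaced by $s\alpha+\beta$ and $\gamma$ replaced by $\gamma+s$, the sum is exactly $Z_{\alpha,\,s\alpha+\beta,\,\gamma+s}(\rho,\phi)$, which yields \eqref{fact}. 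One could equivalently carry this out through the probability generating function $\mathbb{E}u^X=Z_{\alpha,\beta,\gamma}(\rho u,\phi)/Z_{\alpha,\beta,\gamma}(\rho,\phi)$, differentiating $s$ times at $u=1$ to get $m_s=\rho^s D^sZ_{\alpha,\beta,\gamma}(\rho,\phi)/Z_{\alpha,\beta,\gamma}(\rho,\phi)$; the two routes coincide.

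There is no serious obstacle here — the argument is essentially bookkeeping — but one should note that the rearrangement and reindexing are legitimate because, for $\rho>0$ and $V_\phi>0$, all summands are non-negative, so the identity holds in $[0,+\infty]$ unconditionally; the value $m_s$ is finite precisely when the numerator series $Z_{\alpha,\,s\alpha+\beta,\,\gamma+s}(\rho,\phi)$ converges, which happens, for instance, whenever $\rho$ lies strictly inside the radius of convergence of $\rho'\mapsto Z_{\alpha,\beta,\gamma}(\rho',\phi)$, since multiplying the general term by $\Gamma(k+\gamma+s)/\Gamma(k+\gamma)$ only introduces a polynomial factor in $k$ and does not change that radius. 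Accordingly the statement is to be read under the standing assumption that the relevant normalizing series converge.
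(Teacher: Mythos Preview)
Your argument is correct and essentially matches the paper's: the paper simply invokes Proposition~\ref{prop_moments}, i.e.\ $m_s=\rho^s D^s Z_{\alpha,\beta,\gamma}(\rho,\phi)/Z_{\alpha,\beta,\gamma}(\rho,\phi)$, and the term-by-term differentiation of the series is exactly the falling-factorial cancellation and index shift you spell out. Your version is just a more explicit unpacking of the same one-line computation.
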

  \begin{proof}
     The result follows directly from the application of Proposition \ref{prop_moments}. 
  \end{proof}
\begin{cor}
    The moments read
    \begin{align}
        \mathbb{E}X^s = \frac{1}{Z_{\alpha,\beta,\gamma}(\rho,\phi)} \sum_{r=1}^s \rho^r \stirling{s}{r} Z_{\alpha,r\alpha+\beta,\gamma+r}(\rho,\phi), \qquad s \in \mathbb{N}.
    \end{align}
    In particular, the first two moments of $X$ read
    \begin{eqnarray}
\mathbb{E}X &=&\rho\frac{Z_{\alpha, \alpha+\beta, \gamma+1}(\rho,\phi)}{Z_{\alpha,\beta,\gamma}(\rho,\phi)}, \label{momento}\\
\mathbb{E}X^2&=& \rho^2\frac{Z_{\alpha, 2\alpha+\beta, \gamma+2}(\rho,\phi)}{Z_{\alpha,\beta,\gamma}(\rho,\phi)}
+\rho\frac{Z_{\alpha, \alpha+\beta, \gamma+1}(\rho,\phi)}{Z_{\alpha,\beta,\gamma}(\rho,\phi)}.\label{momentodue}
    \end{eqnarray}
\end{cor}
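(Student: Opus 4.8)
The plan is to expand the moments in terms of the factorial moments using the classical change of basis between monomials and falling factorials, and then invoke Proposition \ref{prop:factorial_generalized} to substitute the closed form for $m_r$. First I would recall that for any non-negative integer-valued random variable $X$, one has the identity
\begin{align}
    X^s = \sum_{r=0}^s \stirling{s}{r} X(X-1)\cdots(X-r+1),
\end{align}
where $\stirling{s}{r}$ are the Stirling numbers of the second kind; taking expectations gives $\mathbb{E}X^s = \sum_{r=0}^s \stirling{s}{r} m_r$. Since $\stirling{s}{0} = 0$ for $s \ge 1$ (equivalently $m_0 = \mathbb{E}[\text{empty product}]$ contributes a constant $1$ only through the $r=0$ term and $\stirling{s}{0}=\delta_{s,0}$), the sum effectively runs from $r=1$ to $s$.

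Next I would insert the expression $m_r = \rho^r Z_{\alpha, r\alpha+\beta, \gamma+r}(\rho,\phi) / Z_{\alpha,\beta,\gamma}(\rho,\phi)$ from Proposition \ref{prop:factorial_generalized}. Pulling the common denominator $Z_{\alpha,\beta,\gamma}(\rho,\phi)$ out of the sum yields exactly
\begin{align}
    \mathbb{E}X^s = \frac{1}{Z_{\alpha,\beta,\gamma}(\rho,\phi)} \sum_{r=1}^s \rho^r \stirling{s}{r} Z_{\alpha, r\alpha+\beta, \gamma+r}(\rho,\phi),
\end{align}
which is the claimed formula. The first two moments then follow by specializing to $s=1$ (where $\stirling{1}{1}=1$, so only the $r=1$ term survives) and $s=2$ (where $\stirling{2}{1}=\stirling{2}{2}=1$, giving the two displayed terms).

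I do not anticipate any genuine obstacle here, since the argument is a routine combination of the Stirling-number identity with the already-established factorial moment formula; the only point requiring a word of care is the bookkeeping of the lower summation limit, namely that the $r=0$ contribution vanishes for $s\ge 1$ because $\stirling{s}{0}=0$, so that no constant term spuriously appears. One should also note implicitly that all the series $Z_{\alpha, r\alpha+\beta, \gamma+r}(\rho,\phi)$ converge, which is inherited from the convergence of $Z_{\alpha,\beta,\gamma}(\rho,\phi)$ together with the fact that $V_\phi(\alpha k + r\alpha + \beta) \ge V_\phi(\alpha k + \beta)$ eventually (by monotonicity of $V_\phi$ for large argument) and the extra $\Gamma(k+\gamma+r)/\Gamma(k+\gamma)$ factor grows only polynomially in $k$, so the ratio test is unaffected; this is already used in establishing Proposition \ref{prop:factorial_generalized}.
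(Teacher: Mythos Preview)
Your proposal is correct and follows exactly the route the paper intends: the corollary is stated without proof precisely because it is the immediate combination of the Stirling-number identity $\mathbb{E}X^s=\sum_{r} \stirling{s}{r} m_r$ (already recorded in Section~\ref{sec:level4b}, equivalently the operator identity \eqref{s_derivative}) with the factorial-moment formula of Proposition~\ref{prop:factorial_generalized}. Your handling of the lower summation limit and of the special cases $s=1,2$ is accurate.
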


\begin{cor}
    If $\gamma = \beta = \alpha = 1$, (eCOM-Poisson case) the factorial moments \eqref{fact} simplify to
    \begin{align}\label{factt}
        m_s = \rho^s \frac{Z_s(\rho,\phi)}{Z(\rho,\phi)}, \qquad s \in \mathbb{N},
    \end{align}
    where $Z_s(\rho,\phi) = Z_{1,s+1,s+1}(\rho,\phi)$. 
    If, in addition,  $\phi(x) = x^\delta$, $\delta \in \mathbb{R}_+$, formula \eqref{factt}, in agreement with (52) of \cite{cahoy2021flexible}, gives the factorial moments of the COM-Poisson distribution:
    \begin{align}
        m_s = \frac{\rho^s}{C_\delta(\rho)} \sum_{k=0}^\infty \frac{\rho^k}{k!} (k+s)!^{1-\delta}, \qquad n \in \mathbb{N},
    \end{align}
    where $C_\delta(\rho)$ is the Le Roy function.
\end{cor}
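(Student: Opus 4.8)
The plan is to obtain both displays by specializing Proposition~\ref{prop:factorial_generalized} and then unwinding the definitions of $Z_{\alpha,\beta,\gamma}$ and of $V_\phi=W_\phi$ at integer arguments; no genuinely new estimate is needed.

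First I would set $\alpha=\beta=\gamma=1$ in \eqref{fact}. The numerator $Z_{\alpha,s\alpha+\beta,\gamma+s}(\rho,\phi)$ becomes $Z_{1,s+1,s+1}(\rho,\phi)$, which is exactly $Z_s(\rho,\phi)$ by the notation introduced just above the statement, and the denominator is $Z_{1,1,1}(\rho,\phi)=Z(\rho,\phi)$. This already yields \eqref{factt}, the only content being a matching of notation.

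Next I would substitute $\phi(x)=x^\delta$, so that $V_\phi=W_\phi$. By the convention \eqref{eq:product-Wphi} one has, for integer $n\ge 1$, $W_\phi(n)=\prod_{k=1}^{n-1}k^\delta=\bigl((n-1)!\bigr)^\delta$ (the empty product giving $W_\phi(1)=1$). With $\alpha=1$ and $\beta=s+1$ the argument $\alpha k+\beta=k+s+1$ is always a positive integer, hence $V_\phi(k+s+1)=\bigl((k+s)!\bigr)^\delta$, while $\Gamma(k+s+1)=(k+s)!$. Therefore
\[
Z_s(\rho,\phi)=\sum_{k=0}^{\infty}\frac{(k+s)!\,\rho^k}{k!\,\bigl((k+s)!\bigr)^{\delta}}=\sum_{k=0}^{\infty}\frac{\rho^k}{k!}\,(k+s)!^{\,1-\delta},
\]
and likewise $Z(\rho,\phi)=\sum_{k=0}^{\infty}\frac{k!\,\rho^k}{k!\,(k!)^{\delta}}=\sum_{k=0}^{\infty}\frac{\rho^k}{(k!)^{\delta}}=C_\delta(\rho)$, the Le Roy function. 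Combining these two identities with \eqref{factt} gives the asserted formula; the claimed agreement with equation~(52) of \cite{cahoy2021flexible} is then a change of summation index. Convergence of the series defining $Z_s$ is automatic (it is a case of \eqref{cond_conv}, or directly $(k+s)!^{1-\delta}/k!\to 0$ superexponentially).

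I do not expect a real obstacle here: the argument is entirely bookkeeping. The only point requiring a little care is the off-by-one in \eqref{eq:product-Wphi}, so that it is $W_\phi(k+s+1)$ — not $W_\phi(k+s)$ — that equals $\bigl((k+s)!\bigr)^\delta$; and the observation that, with $\alpha=1$ and $\beta,\gamma$ integers, $V_\phi$ is only ever evaluated at integer points, so the product definition of $W_\phi$ suffices and neither the Bernstein–gamma extension nor Theorem~\ref{webster_conv} is invoked.
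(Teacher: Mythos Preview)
Your proposal is correct and follows exactly the route the paper intends: the corollary is stated without proof because it is a direct specialization of Proposition~\ref{prop:factorial_generalized}, and you have carried out precisely that specialization, including the careful handling of the off-by-one in $W_\phi(n)=\bigl((n-1)!\bigr)^\delta$. The only slip is cosmetic: $Z_s(\rho,\phi)$ is defined \emph{within} the corollary rather than ``just above'' it.
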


\begin{rmk}
    The following Tur\'an-type inequality for the function $Z$ holds: 
    \begin{equation}
      Z_{\alpha,\alpha+\beta,\gamma+1}(\rho,\phi)+\rho Z_{2\alpha,\alpha+\beta,\gamma+2}(\rho,\phi)\geq \rho\frac{Z_{\alpha,\alpha+\beta,\gamma+1}^2(\rho,\phi)}{Z_{\alpha,\beta,\gamma}(\rho,\phi)}. 
    \end{equation} 
    This directly follows from the expressions of the first two moments  of $X$ in \eqref{momento} and \eqref{momentodue}.
\end{rmk}

In the two following propositions  we provide conditions for over and underdispersion of $X$. Note that, in this case, $V_\phi$ cannot be expressed in terms of products of functions $\phi$. 
\begin{Proposition}
\label{prop:dispersion}
The random variable $X$ is overdispersed (underdispersed) if and only if
\begin{eqnarray}\label{prop:disp}
    Z_{\alpha,\beta,\gamma}(\rho,\phi)
    Z_{\alpha,2\alpha+\beta,\gamma+2}(\rho,\phi)> (<) \  Z_{\alpha,\alpha+\beta,\gamma+1}^2(\rho,\phi).
\end{eqnarray}
\end{Proposition}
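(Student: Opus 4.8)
The plan is to express the dispersion condition directly in terms of the first two moments, using the corollary that precedes this proposition. Recall that $X$ is overdispersed precisely when $\mathbb{V}\mathrm{ar}\,X > \mathbb{E}X$, i.e.\ $\mathbb{E}X^2 - (\mathbb{E}X)^2 > \mathbb{E}X$, which rearranges to $\mathbb{E}X^2 > (\mathbb{E}X)^2 + \mathbb{E}X = \mathbb{E}[X(X-1)] + 2\mathbb{E}X = m_2 + 2m_1 - \mathbb{E}X + \mathbb{E}X$; more cleanly, $\mathbb{V}\mathrm{ar}\,X - \mathbb{E}X = m_2 + m_1 - m_1^2 - m_1 = m_2 - m_1^2$, so overdispersion is equivalent to $m_2 > m_1^2$ (and underdispersion to $m_2 < m_1^2$).

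From here I would substitute the closed forms for the factorial moments supplied by Proposition \ref{prop:factorial_generalized}. We have $m_1 = \rho\, Z_{\alpha,\alpha+\beta,\gamma+1}(\rho,\phi)/Z_{\alpha,\beta,\gamma}(\rho,\phi)$ and $m_2 = \rho^2 Z_{\alpha,2\alpha+\beta,\gamma+2}(\rho,\phi)/Z_{\alpha,\beta,\gamma}(\rho,\phi)$. Plugging these in, the inequality $m_2 > m_1^2$ becomes
\begin{equation*}
\frac{\rho^2 Z_{\alpha,2\alpha+\beta,\gamma+2}(\rho,\phi)}{Z_{\alpha,\beta,\gamma}(\rho,\phi)} > \frac{\rho^2 Z_{\alpha,\alpha+\beta,\gamma+1}^2(\rho,\phi)}{Z_{\alpha,\beta,\gamma}^2(\rho,\phi)}.
\end{equation*}
Multiplying through by $Z_{\alpha,\beta,\gamma}^2(\rho,\phi)/\rho^2 > 0$ (all the $Z$'s are positive sums and $\rho>0$) yields exactly $Z_{\alpha,\beta,\gamma}(\rho,\phi)\,Z_{\alpha,2\alpha+\beta,\gamma+2}(\rho,\phi) > Z_{\alpha,\alpha+\beta,\gamma+1}^2(\rho,\phi)$, which is \eqref{prop:disp}. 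The underdispersed case is identical with every inequality reversed, and the equidispersed (Poisson-like) boundary corresponds to equality.

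There is essentially no serious obstacle here: the only points requiring a word of care are the elementary identity $\mathbb{V}\mathrm{ar}\,X - \mathbb{E}X = m_2 - m_1^2$, which should be stated explicitly so the reader sees why the second factorial moment and the square of the first are the natural objects, and the positivity of $Z_{\alpha,\beta,\gamma}(\rho,\phi)$ and of $\rho$, which justifies clearing denominators without flipping the inequality. Both are immediate. One might also remark that this Tur\'an-type inequality is the exact analogue, at the level of the normalizing function $Z$, of the log-convexity condition \eqref{condition1} on the weight sequence in the $\alpha=\beta=\gamma=1$ case, but since $V_\phi$ is no longer a product of values of $\phi$ the criterion cannot be reduced to a pointwise statement about $\phi$, which is why it is phrased in terms of $Z$.
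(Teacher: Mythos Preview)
Your proof is correct and follows exactly the same route as the paper: overdispersion is recast as $m_2>m_1^2$ via the identity $\mathbb{V}\mathrm{ar}\,X-\mathbb{E}X=m_2-m_1^2$, and then the factorial-moment expressions from Proposition~\ref{prop:factorial_generalized} are substituted and denominators cleared. If anything, you are more explicit than the paper's own proof, which simply asserts the equivalence $\text{overdispersion}\Leftrightarrow m_2>m_1^2$ and invokes Proposition~\ref{prop:factorial_generalized} without further comment.
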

\begin{proof}
    We prove the characterization for the overdispersion case. The same reasoning applies to the case of underdispersion.
    The variable $X$ is overdispersed if and only if $m_2>m_1^2$ which in this case, recalling Proposition \ref{prop:factorial_generalized}, corresponds to \eqref{prop:disp}.
\end{proof}
\begin{rmk}
    Note that, Proposition \ref{prop:dispersion} applies to the special case of the eCOM-Poisson. However, Theorem \ref{th:ecom_disp} provides a condition that is easier to check.
\end{rmk}
Next, we provide a sufficient condition for  overdispersion (underdispersion) that involves the first derivative of the function
\begin{equation}\label{digamma}
   \psi_\phi(y):=\frac{\mathrm{d}}{\mathrm{d y}}\log V_\phi(y), \qquad y>0,
\end{equation}
where, in particular, $\psi_{\textup{id}}$ is the classical digamma function.
\begin{Proposition}
    If
    \begin{equation}
    \label{disp_prime}
        \psi\prime_{\textup{id}}{(y+\gamma)} \geq (\leq) \ \alpha \psi\prime_\phi(\alpha y +\beta)
    \end{equation}
    where $\psi\prime_\phi$ is the first derivative of the function defined in \eqref{digamma}, 
    then $X$ is overdispersed (underdispersed).
 \end{Proposition}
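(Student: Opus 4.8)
The plan is to recast \eqref{pn_COM-gen} as a weighted Poisson distribution and then reduce the dispersion question to a log-convexity/log-concavity property of the associated weight, exactly as in the reasoning that precedes Theorem~\ref{th:ecom_disp}. Multiplying and dividing \eqref{pn_COM-gen} by $e^{-\rho}$ gives
\[
P_n^\phi(\rho)=\frac{\rho^n e^{-\rho}}{n!}\,\frac{w(n)}{\mathbb E\,w(Y)},\qquad Y\sim\text{Poisson}(\rho),
\]
with the $\rho$-independent weight $w(n)=\Gamma(n+\gamma)/V_\phi(\alpha n+\beta)$. By Theorem~3 of \cite{MR2388011} together with the corollary following Theorem~4 of the same paper — the very tools invoked for Theorem~\ref{th:ecom_disp} — it then suffices to show that the sequence $\bigl(w(n)\bigr)_{n\in\mathbb N}$ is log-convex to obtain overdispersion, and log-concave to obtain underdispersion.

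To this end I would pass to the interpolant $W(y):=\Gamma(y+\gamma)/V_\phi(\alpha y+\beta)$, which, since $\gamma,\beta>0$, is positive and $\mathcal C^2$ on an open interval containing $[0,\infty)$ (note that \eqref{disp_prime} already presupposes $V_\phi\in\mathcal C^2$), and which satisfies $w(n)=W(n)$. If $(\log W)''\ge0$ on $(0,\infty)$, then $\log W$ is convex there and, by continuity, on $[0,\infty)$, so midpoint convexity gives $2\log W(n)\le\log W(n-1)+\log W(n+1)$ for every $n\ge1$, i.e.\ $\bigl(w(n)\bigr)$ is log-convex; the concave case is dual. Thus the whole statement reduces to controlling the sign of $(\log W)''$.

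That last step is the routine calculation. Writing $\log W(y)=\log\Gamma(y+\gamma)-\log V_\phi(\alpha y+\beta)$ and recalling $\psi_{\textup{id}}=(\log\Gamma)'$ and $\psi_\phi=(\log V_\phi)'$, the chain rule (with $u=\alpha y+\beta$) yields $(\log W)''(y)=\psi'_{\textup{id}}(y+\gamma)-\alpha^2\psi'_\phi(\alpha y+\beta)$, i.e.\ the difference of the two second log-derivatives appearing in \eqref{disp_prime}. Under hypothesis \eqref{disp_prime} this quantity is $\ge0$ (respectively $\le0$) for all $y>0$, hence $\log W$ is convex (respectively concave), the weight sequence is log-convex (respectively log-concave), and \cite{MR2388011} delivers the asserted overdispersion (respectively underdispersion).

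I would also record an alternative for the overdispersed half that stays entirely within the paper: since $Z_{\alpha,s\alpha+\beta,\gamma+s}(\rho,\phi)=\sum_{k\ge0}\frac{\rho^k}{k!}\,W(s+k)$ and a sum with positive coefficients of log-convex functions (here the translates $s\mapsto W(s+k)$) is again log-convex, log-convexity of $W$ forces $s\mapsto Z_{\alpha,s\alpha+\beta,\gamma+s}(\rho,\phi)$ to be log-convex, and the inequality between its values at $s=0,1,2$ is precisely \eqref{prop:disp}, so Proposition~\ref{prop:dispersion} applies. The only genuinely delicate point — and the reason the weighted-Poisson route cannot be dispensed with — is the underdispersed case: ``a sum of log-concave functions need not be log-concave'', so there one really does rely on the two-sided criterion of \cite{MR2388011}; the transfer of convexity of $\log W$ from the real line to the integers is, by contrast, entirely harmless.
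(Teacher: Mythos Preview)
Your approach is exactly the paper's: view \eqref{pn_COM-gen} as a weighted Poisson law with weight $w(n)=\Gamma(n+\gamma)/V_\phi(\alpha n+\beta)$, appeal to the log-convexity/log-concavity criterion of \cite{MR2388011}, and reduce everything to the sign of $(\log W)''$. The paper's proof stops at $\frac{\mathrm d}{\mathrm dy}\bigl[\psi_{\textup{id}}(y+\gamma)-\alpha\psi_\phi(\alpha y+\beta)\bigr]$ without carrying out the last differentiation, whereas you do and correctly obtain $(\log W)''(y)=\psi'_{\textup{id}}(y+\gamma)-\alpha^{2}\psi'_\phi(\alpha y+\beta)$.

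One point deserves flagging: this expression does \emph{not} literally match \eqref{disp_prime}, which carries the factor $\alpha$ rather than $\alpha^{2}$, so your sentence ``i.e.\ the difference of the two second log-derivatives appearing in \eqref{disp_prime}'' is not accurate as written. The chain rule contributes a factor $\alpha$ at each of the two differentiations, and the condition actually delivered by the argument (yours and the paper's alike) is $\psi'_{\textup{id}}(y+\gamma)\ge(\le)\,\alpha^{2}\psi'_\phi(\alpha y+\beta)$. This is a slip in the stated hypothesis rather than in your reasoning; note that the subsequent remark with $\alpha=1$ is unaffected. Your additional route to the overdispersed half via log-convexity of $s\mapsto Z_{\alpha,s\alpha+\beta,\gamma+s}(\rho,\phi)$ and Proposition~\ref{prop:dispersion} is a nice observation not present in the paper.
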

\begin{proof}
  Using Theorem 4 of \cite{MR2388011}, we consider log-convexity of the weight function $w(y)=\Gamma(y+\gamma)/V_\phi(\alpha y+\beta)$, that is 
  \begin{eqnarray}
     \frac{\mathrm{d}^2}{\mathrm{d}y^2}\log 
     \frac{\Gamma(y+\gamma)}{V_\phi(\alpha y+\beta)} = \frac{\mathrm{d}}{\mathrm{d}y}\left[\psi_{\textbf{id}}(y+\gamma)-\alpha \psi_\phi(\alpha y+\beta)
     \right]\geq (\leq) \, 0
  \end{eqnarray}
  from which the statement immediately follows.
\end{proof}
\begin{rmk}
    Note that for $\alpha=1$, \eqref{disp_prime} reduces to
    \begin{equation}
        \frac{\psi\prime_{\textup{id}}{(y+\gamma)}}{\psi\prime_\phi(y +\beta)} \geq (\leq) \ 1. 
    \end{equation}  
    If, in addition, $\phi \equiv \textup{id}$, then \eqref{disp_prime} coincides with  condition $(58)$ of \cite{cahoy2021flexible} for the alternative generalized Mittag-Leffler distribution \cite{pogany2016probability}. 
\end{rmk}

%%%%%%%%%%%%%%%%%%%%%%%%%%%%%%%%%%%%%%%%
%%%%%%%%%%%%%%%%%%%%%%%%%%%%%%%%%%%%%%%%%%%

\section*{Acknowledgments}
The authors G.D.\ and F.P.\ thank the GNAMPA
group of INdAM and acknowledge financial support under the MIUR-PRIN 2022 project \lq\lq Non-Markovian dynamics and non-local equations\rq \rq, no. 202277N5H9. The third author Z.T.\ was partially supported by the project 24-10177L, GACR (LA granty) \lq\lq Fractional and fuzzy-fractional transport in disordered environments\rq \rq.

The authors also thank the Lorentz Center in Leiden, The Netherlands.

% G.D. and F.P. have been partially supported by the MIUR-PRIN 2022 project \lq\lq Non-Markovian dynamics and non-local equations\rq \rq, no.\ 202277N5H9 and by the GNAMPA
%group of INdAM.
%The authors also thank the Lorentz Center in Leiden, 
%The Netherlands.

\bibliographystyle{abbrv}
\bibliography{refs}% Produces the bibliography via BibTeX.

\begin{thebibliography}{10}

\bibitem{abouee2016state}
H.~Abouee-Mehrizi and O.~Baron.
\newblock State-dependent {M}/{G}/1 queueing systems.
\newblock {\em Queueing Systems}, 82(1):121--148, 2016.

\bibitem{abramowitz1968handbook}
M.~Abramowitz and I.~A. Stegun.
\newblock {\em Handbook of mathematical functions with formulas, graphs, and
  mathematical tables}, volume~55.
\newblock US Government printing office, 1968.

\bibitem{bardwell1964two}
G.~E. Bardwell and E.~L. Crow.
\newblock A two-parameter family of hyper-{P}oisson distributions.
\newblock {\em Journal of the American Statistical Association},
  59(305):133--141, 1964.

\bibitem{bartholme2021turan}
C.~Bartholm{\'e} and P.~Patie.
\newblock Turan inequalities and complete monotonicity for a class of entire
  functions.
\newblock {\em Analysis Mathematica}, 47(3):507--527, 2021.

\bibitem{jedidi}
K.~Basalim, S.~Bridaa, and W.~Jedidi.
\newblock Internal {B}ernstein functions and {L}{\'e}vy-{L}aplace exponents.
\newblock {\em Mathematical Methods in the Applied Sciences}, pages 1--11,
  2020.

\bibitem{MR2535014}
L.~Beghin and E.~Orsingher.
\newblock Fractional {P}oisson processes and related planar random motions.
\newblock {\em Electron. J. Probab.}, 14:no. 61, 1790--1827, 2009.

\bibitem{beisemann2022flexible}
M.~Beisemann.
\newblock A flexible approach to modelling over-, under-and equidispersed count
  data in {IRT}: The two-parameter {C}onway--{M}axwell--{P}oisson model.
\newblock {\em British Journal of Mathematical and Statistical Psychology},
  75(3):411--443, 2022.

\bibitem{Bertoin2004SomeCB}
J.~Bertoin, B.~Roynette, and M.~Yor.
\newblock Some connections between (sub)critical branching mechanisms and
  {B}ernstein functions.
\newblock {\em arXiv:math/0412322 [math.PR]}, 2004.

\bibitem{cahoy2021flexible}
D.~O. Cahoy, E.~Di~Nardo, and F.~Polito.
\newblock Flexible models for overdispersed and underdispersed count data.
\newblock {\em Statistical Papers}, 62(6):2969--2990, 2021.

\bibitem{cahoy2013renewal}
D.~O. Cahoy and F.~Polito.
\newblock Renewal processes based on generalized {M}ittag--{L}effler waiting
  times.
\newblock {\em Communications in Nonlinear Science and Numerical Simulation},
  18(3):639--650, 2013.

\bibitem{patie_jacobi}
P.~Cheridito, P.~Patie, A.~Srapionyan, and A.~Vaidyanathan.
\newblock On non-local ergodic {J}acobi semigroups: spectral theory,
  convergence-to-equilibrium and contractivity.
\newblock {\em Journal de l{\textquoteright}\'Ecole polytechnique {\textemdash}
  Math\'ematiques}, 8:331--378, 2021.

\bibitem{consul1973generalization}
P.~C. Consul and G.~C. Jain.
\newblock A generalization of the {P}oisson distribution.
\newblock {\em Technometrics}, 15(4):791--799, 1973.

\bibitem{conway1962queuing}
R.~W. Conway and W.~L. Maxwell.
\newblock A queuing model with state dependent service rates.
\newblock {\em Journal of Industrial Engineering}, 12(2):132--136, 1962.

\bibitem{daly}
F.~Daly and R.~Gaunt.
\newblock The {C}onway-{M}axwell-{P}oisson distribution: Distributional theory
  and approximation.
\newblock {\em ALEA: Latin American Journal of Probability and Mathematical
  Statistics}, 13(2):635–658, 2016.

\bibitem{MR2200069}
J.~del Castillo and M.~P\'{e}rez-Casany.
\newblock Overdispersed and underdispersed {P}oisson generalizations.
\newblock {\em J. Statist. Plann. Inference}, 134(2):486--500, 2005.

\bibitem{dicr2015fractional}
A.~Di~Crescenzo, B.~Martinucci, and A.~Meoli.
\newblock A fractional counting process and its connection with the {P}oisson
  process.
\newblock {\em ALEA, Lat. Am. J. Probab. Math. Stat}, 13:291--307, 2016.

\bibitem{DPS2022}
G.~D’Onofrio, P.~Patie, and L.~Sacerdote.
\newblock Jacobi processes with jumps as neuronal models: A first passage time
  analysis.
\newblock {\em SIAM Journal on Applied Mathematics}, 84(1):189--214, 2024.

\bibitem{garra2018note}
R.~Garra, E.~Orsingher, and F.~Polito.
\newblock A note on {H}adamard fractional differential equations with varying
  coefficients and their applications in probability.
\newblock {\em Mathematics}, 6(1):4, 2018.

\bibitem{MR3171991}
R.~Garra and F.~Polito.
\newblock On some operators involving {H}adamard derivatives.
\newblock {\em Integral Transforms Spec. Funct.}, 24(10):773--782, 2013.

\bibitem{gaunt2019asymptotic}
R.~E. Gaunt, S.~Iyengar, A.~B. Olde~Daalhuis, and B.~Simsek.
\newblock An asymptotic expansion for the normalizing constant of the
  {C}onway--{M}axwell--{P}oisson distribution.
\newblock {\em Annals of the Institute of Statistical Mathematics},
  71:163--180, 2019.

\bibitem{gerhold2012asymptotics}
S.~Gerhold.
\newblock Asymptotics for a variant of the {M}ittag--{L}effler function.
\newblock {\em Integral Transforms and Special Functions}, 23(6):397--403,
  2012.

\bibitem{HWG78}
H.~W. Gould.
\newblock Evaluation of sums of convolved powers using {S}tirling and
  {E}ulerian numbers.
\newblock {\em Fibonacci Quart.}, 16(2):488--497, 1978.

\bibitem{MR3543682}
R.~Herrmann.
\newblock Generalization of the fractional {P}oisson distribution.
\newblock {\em Fract. Calc. Appl. Anal.}, 19(4):832--842, 2016.

\bibitem{hinde1998overdispersion}
J.~Hinde and C.~G. Dem{\'e}trio.
\newblock Overdispersion: models and estimation.
\newblock {\em Computational statistics \& data analysis}, 27(2):151--170,
  1998.

\bibitem{hirsch_yor}
F.~Hirsch and M.~Yor.
\newblock {On the Mellin transforms of the perpetuity and the remainder
  variables associated to a subordinator}.
\newblock {\em Bernoulli}, 19(4):1350 -- 1377, 2013.

\bibitem{hooshmand2022remarks}
M.~Hooshmand.
\newblock Remarks on the functional equation $f (x+ 1)= g (x) f (x)$ and a
  uniqueness theorem for the gamma function.
\newblock {\em Indian Journal of Pure and Applied Mathematics}, pages 1--6,
  2022.

\bibitem{huang2023arbitrarily}
A.~Huang.
\newblock On arbitrarily underdispersed discrete distributions.
\newblock {\em The American Statistician}, 77(1):29--34, 2023.

\bibitem{johnson}
N.~L. Johnson, A.~W. Kemp, and S.~Kotz.
\newblock {\em Univariate discrete distributions}.
\newblock Wiley Series in Probability and Statistics. Wiley-Interscience [John
  Wiley \& Sons], Hoboken, NJ, third edition, 2005.

\bibitem{kaneiwa}
R.~Kaneiwa.
\newblock The formula for higher order derivatives of inverse functions.
\newblock {\em Otaru University of Commerce Humanities Research}, 131:1--3,
  2016.

\bibitem{kemp1968wide}
A.~W. Kemp.
\newblock A wide class of discrete distributions and the associated
  differential equations.
\newblock {\em Sankhy{\=a}: The Indian Journal of Statistics, Series A}, pages
  401--410, 1968.

\bibitem{kemp1974family}
A.~W. Kemp and C.~Kemp.
\newblock A family of discrete distributions defined via their factorial
  moments.
\newblock {\em Communications in Statistics-Theory and Methods},
  3(12):1187--1196, 1974.

\bibitem{MR2388011}
C.~C. Kokonendji, D.~Miz\`ere, and N.~Balakrishnan.
\newblock Connections of the {P}oisson weight function to overdispersion and
  underdispersion.
\newblock {\em J. Statist. Plann. Inference}, 138(5):1287--1296, 2008.

\bibitem{kyprianou2014fluctuations}
A.~Kyprianou.
\newblock {\em Fluctuations of L{\'e}vy Processes with Applications:
  Introductory Lectures}.
\newblock Universitext. Springer Berlin Heidelberg, 2014.

\bibitem{gall1999spatial}
J.~F. Le~Gall.
\newblock {\em Spatial Branching Processes, Random Snakes and Partial
  Differential Equations}.
\newblock Lectures in Mathematics. ETH Z{\"u}rich. Birkh{\"a}user Basel, 1999.

\bibitem{leroy}
E.~Le~Roy.
\newblock {\em Valeurs asymptotiques de certaines s{\'e}ries proc{\'e}dant
  suivant les puissances enti{\`e}res et positives d'une variable r{\'e}elle}.
\newblock Bulletin des sciences math{\'e}matiques, 1900.

\bibitem{macci_pacchiarotti_villa_2022}
C.~Macci, B.~Pacchiarotti, and E.~Villa.
\newblock Asymptotic results for families of random variables having power
  series distributions.
\newblock {\em Modern Stochastics: Theory and Applications}, 9(2):207--228,
  2022.

\bibitem{meerschaert2011fractional}
M.~Meerschaert, E.~Nane, and P.~Vellaisamy.
\newblock The fractional {P}oisson process and the inverse stable subordinator.
\newblock {\em Electron. J. Probab}, 2011.

\bibitem{michelitsch2020generalized}
T.~M. Michelitsch and A.~P. Riascos.
\newblock Generalized fractional {P}oisson process and related stochastic
  dynamics.
\newblock {\em Fractional Calculus and Applied Analysis}, 23(3):656--693, 2020.

\bibitem{minchev2023asymptotics}
M.~Minchev and M.~Savov.
\newblock Asymptotics for densities of exponential functionals of
  subordinators.
\newblock {\em Bernoulli}, 29(4):3307--3333, 2023.

\bibitem{nadarajah2009useful}
S.~Nadarajah.
\newblock Useful moment and {CDF} formulations for the com--poisson
  distribution.
\newblock {\em Statistical Papers}, 50(3):617--622, 2009.

\bibitem{patie2018bernstein}
P.~Patie and M.~Savov.
\newblock Bernstein-gamma functions and exponential functionals of {L}{\'e}vy
  processes.
\newblock {\em Electronic Journal of Probability}, 23:1--101, 2018.

\bibitem{MR4320772}
P.~Patie and M.~Savov.
\newblock Spectral expansions of non-self-adjoint generalized {L}aguerre
  semigroups.
\newblock {\em Mem. Amer. Math. Soc.}, 272(1336):vii+182, 2021.

\bibitem{patie2021self}
P.~Patie and A.~Srapionyan.
\newblock Self-similar {C}auchy problems and generalized {M}ittag-{L}effler
  functions.
\newblock {\em Fractional Calculus and Applied Analysis}, 24(2):447--482, 2021.

\bibitem{pogany2016probability}
T.~K. Pog{\'a}ny and Z.~Tomovski.
\newblock Probability distribution built by {P}rabhakar function. related
  tur{\'a}n and laguerre inequalities.
\newblock {\em Integral Transforms and Special Functions}, 27(10):783--793,
  2016.

\bibitem{puig2023some}
P.~Puig, J.~Valero, and A.~Fern{\'a}ndez-Fontelo.
\newblock Some mechanisms leading to underdispersion: Old and new proposals.
\newblock {\em Scandinavian Journal of Statistics}, 2023.

\bibitem{pujol2014new}
M.~Pujol, J.-F. Barquinero, P.~Puig, R.~Puig, M.~R. Caball{\'\i}n, and
  L.~Barrios.
\newblock A new model of biodosimetry to integrate low and high doses.
\newblock {\em PLoS One}, 9(12):e114137, 2014.

\bibitem{schilling2012bernstein}
R.~L. Schilling, R.~Song, and Z.~Vondracek.
\newblock {\em Bernstein functions}.
\newblock de Gruyter, 2012.

\bibitem{sellers2023conway}
K.~F. Sellers.
\newblock {\em The {C}onway--{M}axwell--{P}oisson distribution}, volume~8.
\newblock Cambridge University Press, 2023.

\bibitem{sellers2012poisson}
K.~F. Sellers, S.~Borle, and G.~Shmueli.
\newblock The {COM}-poisson model for count data: a survey of methods and
  applications.
\newblock {\em Applied Stochastic Models in Business and Industry},
  28(2):104--116, 2012.

\bibitem{sellers2010flexible}
K.~F. Sellers and G.~Shmueli.
\newblock A flexible regression model for count data.
\newblock {\em The Annals of Applied Statistics}, pages 943--961, 2010.

\bibitem{shmueli2005useful}
G.~Shmueli, T.~P. Minka, J.~B. Kadane, S.~Borle, and P.~Boatwright.
\newblock A useful distribution for fitting discrete data: revival of the
  {C}onway--{M}axwell--{P}oisson distribution.
\newblock {\em Journal of the Royal Statistical Society Series C: Applied
  Statistics}, 54(1):127--142, 2005.

\bibitem{Shmueli2005}
G.~Shmueli, T.~P. Minka, J.~B. Kadane, S.~Borle, and P.~Boatwright.
\newblock A useful distribution for fitting discrete data: revival of the
  {C}onway--{M}axwell–{P}oisson distribution.
\newblock {\em Journal of the Royal Statistical Society: Series C (Applied
  Statistics)}, 54(1):127--142, 2005.

\bibitem{simon2022remark}
T.~Simon.
\newblock Remark on a {M}ittag--{L}effler function of {L}e {R}oy type.
\newblock {\em Integral Transforms and Special Functions}, 33(2):108--114,
  2022.

\bibitem{siri2012parametric}
P.~Siri, E.~Henninger, and M.~P. Sormani.
\newblock A parametric model fitting time to first event for overdispersed
  data: application to time to relapse in multiple sclerosis.
\newblock {\em Lifetime data analysis}, 18:139--156, 2012.

\bibitem{tomovskipoganj2011}
Z.~Tomovski and T.~Pog{\'a}nj.
\newblock Integral expressions for {M}athieu–type power series and for the
  {B}utzer--{F}locke--{H}auss ${\Omega}$ function.
\newblock {\em Fract. Calc. Appl. Anal.}, 14(4):623--634, 2011.

\bibitem{tomovski2014laplace}
Z.~Tomovski, T.~K. Pog{\'a}ny, and H.~M. Srivastava.
\newblock Laplace type integral expressions for a certain three-parameter
  family of generalized {M}ittag--{L}effler functions with applications
  involving complete monotonicity.
\newblock {\em Journal of the Franklin Institute}, 351(12):5437--5454, 2014.

\bibitem{watson1966handbook}
G.~N. Watson.
\newblock {\em A Treatise on the Theory of {B}essel Functions, 2nd edition}.
\newblock Cambridge University Press, Cambridge, 1966.

\bibitem{webster}
R.~Webster.
\newblock Log-convex solutions to the functional equation $f (x+ 1)= g (x) f
  (x)$: $\gamma$-type functions.
\newblock {\em Journal of Mathematical Analysis and Applications},
  209(2):605--623, 1997.

\bibitem{wu}
G.~Wu, S.~H. Holan, C.~H. Nilon, and C.~K. Wikle.
\newblock {Bayesian binomial mixture models for estimating abundance in
  ecological monitoring studies}.
\newblock {\em The Annals of Applied Statistics}, 9(1):1 -- 26, 2015.

\bibitem{zhu2012modeling}
F.~Zhu.
\newblock Modeling overdispersed or underdispersed count data with generalized
  {P}oisson integer-valued {GARCH} models.
\newblock {\em Journal of Mathematical Analysis and Applications},
  389(1):58--71, 2012.

\end{thebibliography}

\end{document}